\theoremstyle{plain}
\numberwithin{equation}{section}
\newtheorem{theorem}{Theorem}
\newtheorem{proposition}{Proposition}
\newtheorem{lemma}{Lemma}
\newtheorem{definition}{Definition}
\theoremstyle{remark}
\newtheorem{remark}{Remark}
\newtheorem{example}{Example}
\renewcommand{\epsilon}{\varepsilon}
\renewcommand{\phi}{\varphi}
\DeclareMathOperator{\Ker}{Ker}
\DeclareMathOperator{\codim}{codim}
\def\N{\mathbb{N}}
\def\R{\mathbb{R}}
\def\Id{\text{\rm Id}}
\def\Z{\mathbb{Z}}
\begin{document}
\title{Dichotomies for triangular systems on Hilbert spaces}

\author{Davor Dragi\v cevi\' c } 
\address{Faculty of Mathematics, University of Rijeka, Croatia}
\email[Davor Dragi\v cevi\' c]{ddragicevic@math.uniri.hr}

\author{Kenneth J. Palmer}
\address{ National Taiwan University, Taipei, Taiwan}
\email[Kenneth J. Palmer]{palmer@math.ntu.edu.tw}

\author{Boris Petkovi\' c}
\address{Faculty of Natural Sciences and Mathematics, University of Banja Luka, Bosnia and Herzegovina}
\email[Boris Petkovi\'c]{boris.petkovic@pmf.unibl.org}

\begin{abstract}
In this article, we study the relationship between the exponential dichotomy properties of a triangular system of linear difference equations and its associated diagonal system on Hilbert spaces. We stress that all previous results in this direction were restricted to the finite-dimensional case. As in the previous work of the first two authors, we rely on the relationship between exponential dichotomies and the so-called admissibility properties. However, this approach requires nontrivial changes when passing from the finite-dimensional to the infinite-dimensional setting.

\vspace{3mm}

\noindent {\it Keywords}: exponential dichotomy; triangular systems; difference equations; Hilbert spaces.

\vspace{2mm}

\noindent{\it 2020 MSC}: 34D05, 34C25.
\end{abstract}

\vspace{1cm}
\maketitle

\section{Introduction}
In the present paper, we study properties of nonautonomous linear discrete systems of triangular form given by 
\begin{equation}\label{lde-intro}
x(n+1)=\begin{pmatrix}A_{11}(n) & A_{12}(n)\\ 0 & A_{22}(n)\end{pmatrix}x(n), \quad n\in J',\end{equation}
where $J\in \{\Z, \Z^+, \Z^-\}$ and 
\begin{equation}\label{Jprime}
J':=\begin{cases}
J & J\in \{\Z, \Z^+\}; \\
\Z^-\setminus \{0\} & J=\Z^-.
\end{cases}
\end{equation}
 Here, $A_{11}(n)\in \mathcal L(X_1, X_1)$, $A_{12}(n)\in \mathcal L(X_2, X_1)$ and $A_{22}(n)\in \mathcal L(X_2, X_2)$, where $X_i$, $i=1, 2$ are arbitrary Hilbert spaces and $\mathcal L(X_i, X_j)$ denotes the space of all bounded linear operators from $X_i$ to $X_j$. 

 We are interested in studying the relationship between the existence of an exponential dichotomy for~\eqref{lde-intro} and its associated \emph{diagonal} system given by 
\begin{equation}\label{de-intro}
x(n+1)=\begin{pmatrix}A_{11}(n) & 0\\ 0 & A_{22}(n)\end{pmatrix}x(n), \quad n\in J'.
\end{equation}
This problem has been addressed in a series of recent works~\cite{BFP1, BFP2, BP, DP, CP} in the case when $X_i$, $i=1, 2$ are finite-dimensional spaces. Therefore, the present paper is the first to consider the case where $X_i$, $i=1, 2$ are infinite-dimensional spaces. 

The approach we use is the one initiated in the previous article by the first two authors~\cite{DP}, and is based on the characterization of exponential dichotomies in terms of the so-called admissibility property. Using this method, in~\cite{DP} the authors were able to give shorter proofs of previously known results obtained in~\cite{BFP2, BP}, as well as to obtain several new ones. 

We recall that the concept of admissibility for a general nonautonomous linear difference equation
\begin{equation}\label{dge}
x(n+1)=A(n)x(n), \quad n\in J'
\end{equation}
 relates to the (possibly unique) solvability of a nonhomogeneous difference equation
\[
x(n+1)=A(n)x(n)+y(n), \quad n\in J'
\]
for a suitable class of inputs $(y(n))_{n\in J'}$. We note that the input $(y(n))_{n\in J'}$ and the output $(x(n))_{n\in J}$ need to belong to suitable Banach sequence spaces.  The results that connect the existence of an exponential dichotomy of~\eqref{dge} and the admissibility property have a long history that goes back to the seminal work of Perron~\cite{Pe}, with subsequent important contributions by Massera and Sch\"affer~\cite{MS}, Dalecki\u \i \ and Kre\u \i n~\cite{DK}, Coppel~\cite{Coppel} and Henry~\cite{H} (among many others). We refer to~\cite{BDVbook} for a detailed survey of this theory and its relationship with hyperbolicity.
We stress that the results of Henry~\cite{H, H1}
 are most relevant for our purposes, as they deal with equations~\eqref{dge} on arbitrary Banach spaces without any invertibility assumptions for coefficients $A(n)$ or the uniform boundedness of their norms. 

 In the case $J=\Z$, the characterization of the exponential dichotomy for~\eqref{dge} in terms of admissibility (see Theorem~\ref{Hthm}) has the same form in both the finite-dimensional and infinite-dimensional settings. Consequently, in the case $J=\Z$, we are able to straightforwardly extend to the infinite-dimensional setting the results from~\cite{DP} connecting the existence of exponential dichotomy for~\eqref{lde-intro} and~\eqref{de-intro}.

On the other hand, the corresponding characterization of the exponential dichotomy for~\eqref{dge} when $J\in \{\Z^+, \Z^-\}$ (see Theorems~\ref{Hthm2} and~\ref{Hthm3}) is more complicated in the infinite-dimensional setting, as it requires that certain subspaces of a phase space are closed or complemented. Consequently, for $J\in \{\Z^+, \Z^-\}$ it is a nontrivial task to obtain infinite-dimensional versions of the results from~\cite{DP} especially when it is assumed that \eqref{lde-intro} has a dichotomy. We refer to Remarks~\ref{Remark5}, \ref{Remark7} and~\ref{Remark8} for a detailed description of the difficulties that arise.

The paper is organized as follows: in Section~\ref{Preliminaries} we list relevant results (also indicating certain modifications) dealing with the characterization of the exponential dichotomy for~\eqref{dge} in terms of appropriate admissibility properties. In addition, we introduce the setup of the paper.
In Section~\ref{Results1}, we provide results yielding the existence of an exponential dichotomy for~\eqref{lde-intro} provided that~\eqref{de-intro} admits an exponential dichotomy.  Finally, in Section~\ref{Results2} we provide necessary and sufficient conditions under which the existence of an exponential dichotomy for~\eqref{lde-intro} implies that~\eqref{de-intro} admits an exponential dichotomy. We give separate results that correspond to all possible choices of $J$. Moreover, we treat systems with invertible coefficients.

\section{Preliminaries}\label{Preliminaries}
Let $X=(X, \|\cdot \|)$ be an arbitrary Banach space. 
By $\mathcal L(X)$ we denote the space of all bounded linear operators on $X$ equipped with the operator norm, which we also denote by $\|\cdot \|$.

Take $J\in \{\Z, \Z^+, \Z^-\}$, where $\Z^+$ denotes the set of nonnegative integers and $\Z^-$ denotes the set of nonpositive integers. We recall the notion of an exponential dichotomy.
For a sequence $(A(n))_{n\in J}\subset \mathcal L(X)$, we consider the associated linear discrete system
\begin{equation}\label{lde-f}
    x(n+1)=A(n)x(n), \quad n\in J',
\end{equation}
where  $J'$ is given by~\eqref{Jprime}.
By $\Phi(m,n)$, we will denote the associated cocycle given by 
\[
\Phi(m, n)=\begin{cases}
A(m-1)\cdots A(n) & m>n;\\
\Id & m=n,
\end{cases}
\]
for $m, n\in J$ with $m\ge n$, where $\Id$ denotes the identity operator on $X$.

\begin{definition}
We say that~\eqref{lde-f} admits an \emph{exponential dichotomy} if there exist a family of projections $(P(n))_{n\in J}\subset \mathcal L(X)$ and constants $K, \alpha>0$ such that the following conditions hold:
\begin{itemize}
\item[(i)] for $n\in J'$,
\[
A(n)P(n)=P(n+1)A(n).
\]
Moreover, $A(n)\rvert_{\mathcal N P(n)}\colon \mathcal N P(n)\to \mathcal N P(n+1)$ is invertible, where $\mathcal N P(n)$ denotes the kernel of $P(n)$;
\item[(ii)]  for $m, n\in J$ with $m\ge n$,
\[
\| \Phi(m, n)P(n)\| \le Ke^{-\alpha (m-n)};
\]
\item[(iii)] for $m, n\in J$ with $m\le n$,
\[
\| \Phi(m, n)(\Id-P(n))\| \le Ke^{-\alpha (n-m)},
\]
where $\Phi(m, n):=\left (\Phi(n, m)\rvert_{\mathcal N P(m)}\right )^{-1}\colon \mathcal N P(n)\to \mathcal N P(m)$.
\end{itemize}
\end{definition}

We will need the following classical result obtained by Henry~\cite[Theorem 7.6.5]{H}.
\begin{theorem}\label{Hthm}
Let $J=\Z$. The following statements are equivalent:
\begin{itemize}
\item[(i)] \eqref{lde-f} admits an exponential dichotomy;
\item[(ii)] for each sequence $(y(n))_{n\in J}\subset X$ such that 
$\sup_{n\in J}\|y(n)\|<+\infty$, there exists a unique bounded sequence $(x(n))_{n\in J} \subset X$ satisfying
\begin{equation}\label{ADM-1}
x(n+1)=A(n)x(n)+y(n) \quad n\in J.
\end{equation}
\end{itemize}
\end{theorem}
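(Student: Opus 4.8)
I would prove the two implications separately; $(i)\Rightarrow(ii)$ is elementary, whereas $(ii)\Rightarrow(i)$ carries all the weight. For $(i)\Rightarrow(ii)$: assume \eqref{lde-f} admits an exponential dichotomy with projections $(P(n))_{n\in\Z}$ and constants $K,\alpha>0$, and let $(y(n))_{n\in\Z}$ be bounded. The plan is to take
\[
x(n)=\sum_{k=-\infty}^{n-1}\Phi(n,k+1)P(k+1)y(k)\;-\;\sum_{k=n}^{+\infty}\Phi(n,k+1)\bigl(\Id-P(k+1)\bigr)y(k),
\]
where on the second sum $\Phi(n,k+1)$ means the inverse cocycle from part (iii) of the definition of exponential dichotomy. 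The dichotomy estimates make both series converge absolutely and give $\|x\|_\infty\le\frac{2K}{1-e^{-\alpha}}\|y\|_\infty$, while a direct computation with the intertwining relation and the cocycle identity shows that $x$ solves \eqref{ADM-1}. For uniqueness, the difference $z$ of two bounded solutions solves the homogeneous equation; from $P(n)z(n)=\Phi(n,m)P(m)z(m)$ and the first dichotomy estimate one gets $P(n)z(n)=0$ by letting $m\to-\infty$, and symmetrically $(\Id-P(n))z(n)=0$, so $z\equiv0$.

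For $(ii)\Rightarrow(i)$ I would first recast the hypothesis operator-theoretically. Let $\mathcal L$ act by $(\mathcal L x)(n)=x(n+1)-A(n)x(n)$ on the domain $D(\mathcal L)=\{x\in\ell^\infty(\Z,X):\mathcal L x\in\ell^\infty(\Z,X)\}$, which is a proper subspace in general since $\sup_n\|A(n)\|$ need not be finite. Continuity of each $A(n)$ makes $\mathcal L$ a closed operator, and (ii) says $\mathcal L\colon D(\mathcal L)\to\ell^\infty(\Z,X)$ is a bijection; hence, by the closed graph theorem, there is $C>0$ with $\|x\|_\infty\le C\|y\|_\infty$ whenever $\mathcal Lx=y$. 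Next, for each $n$ I would introduce the \emph{stable set} $\mathcal S(n)=\{\xi\in X:\sup_{k\ge n}\|\Phi(k,n)\xi\|<\infty\}$ and the \emph{unstable set} $\mathcal U(n)$ of those $\eta\in X$ that arise as $x(n)$ for some bounded solution $(x(k))_{k\le n}$ of the homogeneous equation. Uniqueness in (ii) forces $\mathcal S(n)\cap\mathcal U(n)=\{0\}$ (gluing the two branches produces a bounded solution on all of $\Z$, which must vanish), and solving \eqref{ADM-1} for the input equal to $\xi$ at time $n-1$ and zero elsewhere exhibits an arbitrary $\xi$ as $x(n)-A(n-1)x(n-1)$ with $x(n)\in\mathcal S(n)$ and $A(n-1)x(n-1)\in\mathcal U(n)$; thus $X=\mathcal S(n)\oplus\mathcal U(n)$, and I let $P(n)$ be the projection onto $\mathcal S(n)$ along $\mathcal U(n)$. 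That very computation identifies $P(n)\xi$ with the time-$n$ value of a bounded solution driven by an input of norm $\|\xi\|$, so $\|P(n)\|\le C$ uniformly in $n$; in particular $\mathcal S(n)$ and $\mathcal U(n)$ are closed. Forward invariance $A(n)\mathcal S(n)\subseteq\mathcal S(n+1)$, $A(n)\mathcal U(n)\subseteq\mathcal U(n+1)$ is immediate from the definitions and gives $A(n)P(n)=P(n+1)A(n)$, while $A(n)$ maps $\mathcal N P(n)=\mathcal U(n)$ bijectively onto $\mathcal U(n+1)$ (injectively by uniqueness, surjectively by the definition of $\mathcal U(n+1)$), with bounded inverse by the bounded inverse theorem.

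The decisive step is the exponential decay. From the same single-point inputs one also reads off, along with $\|P(n)\|\le C$, a uniform bound $\|\Phi(m,n)P(n)\|\le M$ for all $m\ge n$ and $\|\Phi(m,n)(\Id-P(n))\|\le M$ for all $m\le n$, with $M$ depending only on $C$. To convert boundedness into decay, fix $n$, an integer $m>n$, and a vector $\xi$ with $P(n)\xi=\xi$ and $\|\xi\|=1$, and feed \eqref{ADM-1} the input equal to $\Phi(k+1,n)\xi$ for $n\le k\le m-1$ and zero otherwise; since $\Phi(k+1,n)\xi=A(k)\bigl(\Phi(k,n)\xi\bigr)$ and $(\Phi(k,n)\xi)_{k\ge n}$ solves the homogeneous equation, the inhomogeneous equation is solved by the linearly weighted sequence that is $0$ for $k\le n$, equals $(k-n)\Phi(k,n)\xi$ for $n\le k\le m$, and equals $(m-n)\Phi(k,n)\xi$ for $k\ge m$; as $\xi\in\mathcal S(n)$ this sequence is bounded, hence it is \emph{the} bounded solution. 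Evaluating at time $m$ gives $(m-n)\|\Phi(m,n)\xi\|\le C\sup_k\|y(k)\|\le CM$, so $\|\Phi(m,n)P(n)\|\le C'/(m-n)$ for a constant $C'$ depending only on $C$; choosing $L$ with $C'/L\le\frac12$ and iterating along the cocycle (using the intertwining relation) upgrades this to $\|\Phi(m,n)P(n)\|\le Ke^{-\alpha(m-n)}$ with $K=2M$, $\alpha=(\ln2)/L$. The complementary estimate for $\Id-P(n)$ follows by the mirror-image argument applied to the cocycle restricted to the $\mathcal U(n)$'s, where it is invertible. I expect this last maneuver --- squeezing quantitative exponential decay out of a purely qualitative admissibility hypothesis by means of the linearly weighted test input --- to be the main obstacle; the genuinely infinite-dimensional points, namely that the $\mathcal S(n)$, $\mathcal U(n)$ be closed and the $P(n)$ uniformly bounded, are exactly what the a priori estimate $\|\mathcal L^{-1}\|\le C$ furnished by the closed graph theorem delivers.
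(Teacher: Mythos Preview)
The paper does not actually prove this theorem: it is stated as a classical result and attributed to Henry~\cite[Theorem 7.6.5]{H}, with no argument given. So there is no ``paper's own proof'' to compare against.

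Your proof is correct and is essentially the classical argument that goes back to Henry. The Green's-function formula for $(i)\Rightarrow(ii)$ and the uniqueness via the dichotomy bounds are standard. For $(ii)\Rightarrow(i)$, your strategy---casting the hypothesis as invertibility of a closed operator on $\ell^\infty(\Z,X)$, extracting the uniform bound $\|\mathcal L^{-1}\|\le C$ by the closed graph theorem, building the splitting $X=\mathcal S(n)\oplus\mathcal U(n)$ from the single-impulse input, and then upgrading the uniform bound $M$ on the stable/unstable cocycles to exponential decay via the linearly weighted test input---is exactly how Henry proceeds. Two small points worth making explicit in a final write-up: (a) uniqueness of bounded \emph{backward} trajectories through a point of $\mathcal U(n)$ (needed so that $\Phi(m,n)(\Id-P(n))$ is well defined and the mirror argument goes through) follows because the difference of two such trajectories extends by zero to a bounded full solution and hence vanishes; (b) the constant $K$ in the final estimate should absorb a factor of $\|P(n)\|\le C$ in addition to $2M$, though this is cosmetic.
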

\begin{remark}
In what follows, each sequence $(y(n))_{n\in J}\subset X$ with the property that $\sup_{n\in J}\|y(n)\|<+\infty$ will be called \emph{bounded sequence}.
\end{remark}
We will also use the versions of Theorem~\ref{Hthm} for cases $J=\Z^+$ and $J= \Z^-$. The following result was obtained by Henry~\cite[Corollary 1.10]{H1}. We recall that a 
subspace $S\subset X$ is \emph{complemented} if there exists a closed subspace $Z\subset X$ such that $X=S\oplus Z$.
\begin{theorem}\label{Hthm2}
    Let $J=\Z^+$. The following statements are equivalent:
  \begin{itemize}
\item[(i)] \eqref{lde-f} admits an exponential dichotomy on $J$;
\item[(ii)] for each bounded sequence $(y(n))_{n\in J}\subset X$, there exists a bounded sequence $(x(n))_{n\in J} \subset X$ satisfying
\begin{equation}\label{ADM2-2}
x(n+1)=A(n)x(n)+y(n), \quad n\in J.
\end{equation}
Moreover, the subspace
\begin{equation}\label{bndsubs}
\mathcal S:=\left \{ v\in X: \ \sup_{n\in \Z^+}\|\Phi(n, 0)v\|<+\infty \right \}
\end{equation}
is closed and complemented. 
\end{itemize}  
\end{theorem}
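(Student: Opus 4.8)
The plan is to prove the two implications separately. The implication $(i)\Rightarrow(ii)$ is a direct construction, while $(ii)\Rightarrow(i)$ is the substantive direction and proceeds by extracting a Green's function from the admissibility hypothesis.

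For $(i)\Rightarrow(ii)$, assume \eqref{lde-f} admits an exponential dichotomy with projections $(P(n))_{n\in\Z^+}$ and constants $K,\alpha>0$. Given a bounded sequence $(y(n))_{n\in\Z^+}$, I would set
\[
x(n):=\sum_{k=0}^{n-1}\Phi(n,k+1)P(k+1)y(k)-\sum_{k=n}^{\infty}\Phi(n,k+1)\bigl(\Id-P(k+1)\bigr)y(k),
\]
the second series converging absolutely by (iii); then (ii) and (iii) yield $\sup_{n}\|x(n)\|\le\frac{2K}{1-e^{-\alpha}}\sup_{k}\|y(k)\|$, and a termwise application of $A(n)\Phi(n,k+1)=\Phi(n+1,k+1)$ shows that $(x(n))_{n\in\Z^+}$ solves \eqref{ADM2-2}. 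To identify $\mathcal S$, I claim $\mathcal S=\Ima P(0)$. The inclusion $\Ima P(0)\subseteq\mathcal S$ is immediate from (ii). For the reverse, given $v\in\mathcal S$ put $w:=(\Id-P(0))v\in\mathcal N P(0)$ and use the invariance relation $\Phi(n,0)(\Id-P(0))=(\Id-P(n))\Phi(n,0)$ together with the invertibility of $\Phi(n,0)$ on the kernels to obtain $\|w\|=\|\Phi(0,n)(\Id-P(n))\Phi(n,0)v\|\le Ke^{-\alpha n}\sup_{m}\|\Phi(m,0)v\|$, which tends to $0$; hence $w=0$ and $v\in\Ima P(0)$. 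Thus $\mathcal S$ is the range of the bounded projection $P(0)$, and is therefore closed and complemented, with complement $\mathcal N P(0)$.

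For $(ii)\Rightarrow(i)$, I would first use the complementedness hypothesis to fix a closed subspace $Z$ with $X=\mathcal S\oplus Z$, and then upgrade the admissibility to a uniquely solvable form. Given a bounded sequence $(y(n))_{n\in\Z^+}$, take any bounded solution $(x(n))$ of \eqref{ADM2-2} and write $x(0)=s+z$ with $s\in\mathcal S$ and $z\in Z$; then $n\mapsto x(n)-\Phi(n,0)s$ is again a bounded solution of \eqref{ADM2-2} — boundedness of $n\mapsto\Phi(n,0)s$ being precisely the definition of $\mathcal S$ — and its value at $0$ lies in $Z$. Such a solution is unique, since the difference of two of them is a bounded solution of \eqref{lde-f} whose initial value lies in $\mathcal S\cap Z=\{0\}$ and is therefore identically zero. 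The resulting linear map $G$ on the Banach space $\ell^\infty$ of bounded $X$-valued sequences on $\Z^+$ is closed: if $y_j\to y$ and $Gy_j\to\xi$ in $\ell^\infty$, then taking the pointwise limit in \eqref{ADM2-2} and using closedness of $Z$ gives $\xi=Gy$. By the closed graph theorem there is $M>0$ with $\|Gy\|_\infty\le M\|y\|_\infty$.

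Feeding into $G$ the inputs supported at a single index then yields a Green's function $\mathcal G(n,j)\in\mathcal L(X)$, $n,j\in\Z^+$, with $\sup_{n,j}\|\mathcal G(n,j)\|\le M$, obeying the usual propagation and jump identities and satisfying $\mathcal G(n,j)=\Phi(n,0)\mathcal G(0,j)$ with $\mathcal G(0,j)\in Z$ for $n\le j$. From this one reads off bounded projections $P(n)$ (built from $\mathcal G(n,n)$) such that $A(n)P(n)=P(n+1)A(n)$, $A(n)\rvert_{\mathcal N P(n)}$ is invertible onto $\mathcal N P(n+1)$, and $\sup_{m\ge n}\|\Phi(m,n)P(n)\|\le M$, with the analogous uniform bound on the unstable part. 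The exponential estimates (ii) and (iii) then follow from these uniform bounds and the evolution property by the standard discrete argument — one first produces an $N$ with $\|\Phi(n+N,n)P(n)\|\le\tfrac12$ uniformly in $n$ by testing $G$ against inputs supported on a window of length $N$, and then iterates. I expect the genuine obstacle to be the construction of the Green's function and the verification that $P(0)$ is a bounded projection onto a closed complement of $Z$: this is exactly where the complementedness of $\mathcal S$ enters, and it is precisely the hypothesis that is automatic when $X$ is finite-dimensional. The passage from the uniform bounds to the exponential estimates, though calculation-heavy, is routine and indifferent to the dimension of $X$.
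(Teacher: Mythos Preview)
The paper does not prove this theorem itself; it is quoted as Henry's result \cite[Corollary 1.10]{H1}, so there is no in-paper argument to compare against beyond that citation. Your sketch follows the classical Henry route and is correct in outline: the variation-of-constants formula handles $(i)\Rightarrow(ii)$ and identifies $\mathcal S=\Ima P(0)$, while for $(ii)\Rightarrow(i)$ you pass from admissibility to a bounded solution operator via the closed graph theorem, extract a Green's function from point-mass inputs, read off the projections, and upgrade the resulting uniform bounds to exponential ones by the window argument. The first half of your $(ii)\Rightarrow(i)$ argument --- producing the unique bounded solution with $x(0)\in Z$ and invoking the closed graph theorem to bound the solution operator --- is in fact carried out in the paper as Remark~\ref{120rem}, where it is used to show that the closedness hypothesis on $\mathcal S$ in (ii) is redundant once complementedness is assumed. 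One small point of precision: the projection at level $n\ge 1$ arises as $P(n)=\mathcal G(n,n-1)$ (the value at $n$ of the solution with input supported at $n-1$) rather than from $\mathcal G(n,n)$ directly, though the two are linked by the jump identity $\mathcal G(n,n-1)-A(n-1)\mathcal G(n-1,n-1)=\Id$, so this does not affect the validity of your outline.
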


\begin{remark}\label{120rem}
We note that the statement of Theorem~\ref{Hthm2} can be modified in the following way: in (ii) it is sufficient to assume that $\mathcal S$  in~\eqref{bndsubs} is complemented, i.e. the assumption that $\mathcal S$ is closed can be omitted.  We stress that this is a known fact that follows from the more general results obtained in~\cite{BDV}. However, here we provide a short argument that directly relies on Theorem~\ref{Hthm2}, which is also applicable to dichotomies on $\Z^-$ (which were not discussed in~\cite{BDV}). 

In order to illustrate this, let us assume that for each bounded sequence $(y(n))_{n\in J}\subset X$, there exists a bounded sequence $(x(n))_{n\in J} \subset X$ satisfying~\eqref{ADM2-2}, and that $\mathcal S$ is complemented. Let $Z\subset X$ be a closed subspace such that $X=\mathcal S\oplus Z$. Take an arbitrary bounded sequence 
$(y(n))_{n\in J}\subset X$ and let $(x(n))_{n\in J} \subset X$ be a bounded sequence satisfying~\eqref{ADM2-2}. Write $x(0)$ as $x(0)=v_1+v_2$, where $v_1\in \mathcal S$ and $v_2\in Z$. We define a new sequence $(\bar x(n))_{n\in J}$ by \[ \bar x(n)=x(n)-\Phi(n, 0)v_1, \quad n\in J. \] Then $(\bar x(n))_{n\in J}$ is bounded, satisfies~\eqref{ADM2-2} and $\bar x(0)=v_2\in Z$. Moreover, this is the only sequence with these three properties. In fact, if $(\tilde x(n))_{n\in J}$ is a bounded sequence satisfying~\eqref{ADM2-2} and such that $\tilde x(0)\in Z$, then
\[
\bar x(n)-\tilde x(n)=\Phi(n, 0)(\bar x(0)-\tilde x(0)), \quad n\in J.
\]
Consequently, $\bar x(0)-\tilde x(0)\in \mathcal S\cap Z$, and thus $\bar x(0)=\tilde x(0)$. We conclude that $\bar x(n)=\tilde x(n)$ for $n\in J$, as claimed.

Let $\ell^\infty(J)$ denote the space of all bounded sequences $(x(n))_{n\in J}$ in $X$ equipped with the supremum norm. We define a linear operator $\Gamma \colon \ell^\infty(J)\to \ell^\infty(J)$ by $\Gamma ((y(n))_{n\in J})=(x(n))_{n\in J}$, where $(x(n))_{n\in J}$ is the unique sequence in $\ell^\infty(J)$ that satisfies~\eqref{ADM2-2} and $x(0)\in Z$. It is a simple exercise to show that $\Gamma$ is closed and consequently a bounded linear operator. 

Take $v\in X$. We define two bounded sequences $(x(n))_{n\in J}$ and $(y(n))$ by 
\[
x(n)=\begin{cases}
    v & n=0; \\
    0 & n\neq 0
\end{cases} \quad \text{and} \quad 
y(n)=\begin{cases}
-A(0)v &n=0; \\
0 &  n\neq 0.
\end{cases}
\]
Clearly, \eqref{ADM2-2} holds. Let $(\bar x(n))_{n\in J}:=\Gamma ((y(n))_{n\in J})$. Set $Pv:=v-\bar x(0)\in \mathcal S$. It is easy to verify that $P$ is a  projection onto $\mathcal S$ along $Z$. Moreover, we note that 
\[
\|Pv\| \le \|v\|+\|\bar x(0)\|\le \|v\|+\|\Gamma\| \cdot \|A(0)v\| \le (1+\|\Gamma\| \cdot \|A(0)\|)\|v\|, 
\]
for every $v\in X$. Hence, $P$ is bounded, and consequently $\mathcal S$ is closed.
\end{remark}

The following result is due to Henry~\cite[Corollary 1.11]{H1} and represents the version of Theorem~\ref{Hthm2} for dichotomies on $\Z^-$.
\begin{theorem}\label{Hthm3}
Let $J=\Z^-$. The following statements are equivalent:
\begin{enumerate}
\item[(i)] \eqref{lde-f} admits an exponential dichotomy on $J$;
\item[(ii)] for each bounded sequence $(y(n))_{n\in J'}\subset X$, there exists a bounded sequence $(x(n))_{n\in J}\subset X$ satisfying 
\begin{equation}\label{ADM3-3}
x(n+1)=A(n)x(n)+y(n), \quad n\in J'.
\end{equation}
Moreover, the subspace $\mathcal U$ of $X$ consisting of all $v\in X$ with the property that there exists a bounded sequence $(x(n))_{n\in J}\subset X$ such that $x(0)=v$ and $x(n)=A(n-1)x(n-1)$ for $n\in J$ is closed and complemented. Finally, the only bounded sequence $(x(n))_{n\in J}\subset X$ that satisfies $x(0)=0$ and $x(n)=A(n-1)x(n-1)$ for $n\in J$ is the zero sequence.
\end{enumerate}
\end{theorem}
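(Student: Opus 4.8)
The plan is to treat the two implications separately: most of (i) $\Rightarrow$ (ii) can be carried out by hand, while (ii) $\Rightarrow$ (i) is the Perron-type argument of Henry \cite[Corollary 1.11]{H1}. For (i) $\Rightarrow$ (ii), assume \eqref{lde-f} has an exponential dichotomy with projections $(P(n))_{n\in J}$ and constants $K,\alpha>0$. Given a bounded input $(y(n))_{n\in J'}$, I would set
\[
x(n)=\sum_{k\le n-1}\Phi(n,k+1)P(k+1)y(k)-\sum_{k=n}^{-1}\Phi(n,k+1)(\Id-P(k+1))y(k),\qquad n\in J
\]
(the second, finite, sum being empty when $n=0$), and check, using conditions (ii)--(iii) of the dichotomy together with a geometric series, that $\sup_n\|x(n)\|\le\frac{2K}{1-e^{-\alpha}}\sup_k\|y(k)\|$ and that $x$ solves \eqref{ADM3-3}. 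To identify $\mathcal U$ I would show $\mathcal U=\mathcal N P(0)$: if $x$ is a bounded homogeneous solution, then $P(0)x(0)=\Phi(0,n)P(n)x(n)$ for $n\le0$ forces $\|P(0)x(0)\|\le Ke^{\alpha n}\sup_m\|x(m)\|\to0$ as $n\to-\infty$, so $x(0)\in\mathcal N P(0)$, while conversely $n\mapsto\Phi(n,0)v$ is a bounded solution for every $v\in\mathcal N P(0)$; hence $\mathcal U=\mathcal N P(0)$ is closed and complemented, with complement $\Ima P(0)$. The zero-solution statement follows from the same backward estimate: if $x$ is bounded, $x(n)=A(n-1)x(n-1)$ for $n\in J$ and $x(0)=0$, then $(\Id-P(0))x(0)=\Phi(0,n)(\Id-P(n))x(n)$ and injectivity of $\Phi(0,n)|_{\mathcal N P(n)}$ give $x(n)\in\Ima P(n)$ for all $n\le0$, and then $x(n_0)=\Phi(n_0,n)P(n)x(n)$ for $n\le n_0\le0$ yields $\|x(n_0)\|\le Ke^{-\alpha(n_0-n)}\sup_m\|x(m)\|\to0$ as $n\to-\infty$, so $x\equiv0$.

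For (ii) $\Rightarrow$ (i) I would run the standard scheme. Fixing a closed complement $Z$ with $X=\mathcal U\oplus Z$, the admissibility hypothesis together with the zero-solution uniqueness yields, for each bounded input, a \emph{unique} bounded solution with $x(0)\in Z$; the resulting solution operator is closed, hence bounded by the closed graph theorem, exactly as in Remark \ref{120rem}. The exponential constants $K,\alpha$ are then extracted from the norm of this operator by the usual device: feeding it truncated and shifted inputs, and exploiting the shift-invariance of the admissibility problem, forces every bounded orbit to contract geometrically. Finally the dichotomy projections are read off the splitting --- $\mathcal N P(n)$ being the space of values at $n$ of bounded homogeneous solutions (so that $\mathcal N P(0)=\mathcal U$), and $\Ima P(n)$ being obtained by evolving $Z$-normalized solutions --- after which one checks $A(n)P(n)=P(n+1)A(n)$ and invertibility of $A(n)|_{\mathcal N P(n)}$.

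I expect the main obstacle to be exactly the point stressed in the introduction: transporting closedness and complementedness of the unstable fibre from $n=0$ to all $n\le0$, and producing a \emph{uniformly} bounded family $(P(n))$. In finite dimensions this step is automatic --- every subspace is closed and complemented, and zero-solution uniqueness follows from a dimension count --- whereas in the Hilbert-space setting one must push complementedness through the possibly non-invertible operators $A(n)$ while controlling $\sup_n\|P(n)\|$; this is the technical core of \cite[Corollary 1.11]{H1}, and it is precisely what forces (ii) to carry the explicit closedness, complementedness, and zero-solution hypotheses. Accordingly, in the write-up I would invoke \cite[Corollary 1.11]{H1} for the implication (ii) $\Rightarrow$ (i), in the same spirit as the use of Theorems \ref{Hthm} and \ref{Hthm2}.
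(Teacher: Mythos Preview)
The paper does not prove this theorem at all: it is stated as a citation of Henry \cite[Corollary 1.11]{H1} and used as a black box, just like Theorems~\ref{Hthm} and~\ref{Hthm2}. Your proposal therefore goes well beyond the paper --- you give a full argument for (i)~$\Rightarrow$~(ii) and then, for (ii)~$\Rightarrow$~(i), you ultimately invoke the same reference. Your sketch of (i)~$\Rightarrow$~(ii) is correct: the variation-of-constants formula, the identification $\mathcal U=\mathcal N P(0)$ via the decay estimate on $P(0)x(0)=\Phi(0,n)P(n)x(n)$, and the zero-solution argument using injectivity of $\Phi(0,n)|_{\mathcal N P(n)}$ all check out. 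Since you end by citing \cite[Corollary 1.11]{H1} for the harder direction, your treatment is consistent with the paper's, only more explicit on the easy half.
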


\begin{remark}\label{1037rem}
Using similar arguments to those in Remark~\ref{120rem}, one can show that in (ii) it is sufficient to assume that $\mathcal U$ is complemented. 
\end{remark}

\subsection{Setup}
Let $X_i=(X_i, \langle \cdot, \cdot\rangle_i)$, $i=1, 2$ be arbitrary Hilbert spaces. For simplicity, we will denote both scalar products $\langle \cdot, \cdot\rangle_1$ and $\langle \cdot, \cdot\rangle_2$ by $\langle \cdot, \cdot \rangle$. Moreover, the norms induced by $\langle \cdot, \cdot \rangle_i$, $i=1, 2$ will be denoted by $\| \cdot \|$.
Then, $X_1\times X_2$ is a Hilbert space with respect to the scalar product
\[
\langle (x_1, x_2), (y_1, y_2)\rangle:=\langle x_1, y_1\rangle+\langle x_2, y_2\rangle.
\]
Instead of considering the norm on $X_1\times X_2$ induced with this scalar product, for us it will be more convenient to work with the equivalent norm given by 
\[
\|(x_1, x_2)\|:=\max \{\|x_1\|, \|x_2\|\}, \quad (x_1, x_2)\in X_1\times X_2.
\]

In this paper, we will consider the system~\eqref{lde-f} with the operators $A(n)$ having the block triangular form
\begin{equation}\label{tf} A (n) =\begin{pmatrix}
A_{11}(n) & A_{12}(n)\\
0 & A_{22}(n)
\end{pmatrix},
\end{equation}
where $A_{11}(n)\in \mathcal L(X_1)$, $A_{22}(n)\in \mathcal L(X_2)$ and $A_{12}(n)\in \mathcal L(X_2, X_1)$, where $\mathcal L(X_2, X_1)$ denotes the space of all bounded linear operators from $X_2$ to $X_1$. We have the corresponding discrete system
\begin{equation}\label{LDE}
x(n+1)=\begin{pmatrix}
A_{11}(n) & A_{12}(n)\\
0 & A_{22}(n)
\end{pmatrix} x(n), \quad n\in J'.
\end{equation}
In this case, to~\eqref{LDE} we associate the corresponding block diagonal system given by 
\begin{equation}\label{ldedd}
    x(n+1)=\begin{pmatrix}
A_{11}(n) & 0\\
0 & A_{22}(n)
    \end{pmatrix} x(n), \quad n\in J'.
\end{equation}
Our goal is to explore the relationship between the existence of an exponential dichotomy for~\eqref{LDE} and~\eqref{ldedd}.
\begin{remark}
Although we focus on systems~\eqref{LDE} of block triangular form which consists of two blocks, using induction, one can extend all the results in our paper to the general case when there are finitely many blocks. 
\end{remark}

We make the following simple observation. 
\begin{proposition}\label{newp}
Let $J\in \{\Z, \Z^+, \Z^-\}$. The following statements are equivalent:
\begin{enumerate}
\item \eqref{ldedd} admits an exponential dichotomy;
\item for $i\in \{1, 2\}$, the discrete system 
\begin{equation}\label{iid}
x_i(n+1)=A_{ii}(n)x_i(n) \quad n\in J'
\end{equation}
admits an exponential dichotomy.
\end{enumerate}
\end{proposition}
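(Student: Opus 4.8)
The plan is to prove the two implications separately: $(2)\Rightarrow(1)$ by an explicit construction, and $(1)\Rightarrow(2)$ through the admissibility characterizations of Section~\ref{Preliminaries}. For $(2)\Rightarrow(1)$, let $(P_i(n))_{n\in J}$, together with constants $K_i,\alpha_i>0$, be dichotomy data for~\eqref{iid}, $i=1,2$, and let $\Phi_i$ denote the corresponding cocycle. One takes $P(n)=\begin{pmatrix}P_1(n)&0\\0&P_2(n)\end{pmatrix}$, which is a bounded projection on $X_1\times X_2$, and notes that the cocycle of~\eqref{ldedd} is $\Phi(m,n)=\begin{pmatrix}\Phi_1(m,n)&0\\0&\Phi_2(m,n)\end{pmatrix}$. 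Conditions (i)--(iii) in the definition of an exponential dichotomy then hold with $K=\max\{K_1,K_2\}$ and $\alpha=\min\{\alpha_1,\alpha_2\}$. The points to record are that, for the max-norm adopted on $X_1\times X_2$, one has $\left\|\begin{pmatrix}T_1&0\\0&T_2\end{pmatrix}\right\|=\max\{\|T_1\|,\|T_2\|\}$; that $\mathcal N P(n)=\mathcal N P_1(n)\times\mathcal N P_2(n)$, so $\Id-P(n)$ maps onto $\mathcal N P(n)$; and that $A(n)|_{\mathcal N P(n)}$ is block-diagonal, hence invertible since each $A_{ii}(n)|_{\mathcal N P_i(n)}$ is. This direction is routine.

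For $(1)\Rightarrow(2)$, assume~\eqref{ldedd} admits an exponential dichotomy. According to whether $J=\Z$, $\Z^+$ or $\Z^-$, Theorem~\ref{Hthm}, \ref{Hthm2} or~\ref{Hthm3} provides the corresponding admissibility statement~(ii) for~\eqref{ldedd}; the aim is to derive the analogous statement for each~\eqref{iid} and then apply the same theorem in the reverse direction. The structural fact underlying this is that, $A(n)$ being block-diagonal, the nonhomogeneous equation $x(n+1)=A(n)x(n)+y(n)$ decouples into its two blocks. Thus, given a bounded input $(y_i(n))$ in $X_i$, one feeds to~\eqref{ldedd} the input with $y_i(n)$ in the $i$-th slot and $0$ in the other, and the resulting bounded solution of~\eqref{ldedd} restricts in its $i$-th component to a bounded solution of the $i$-th equation. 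For the uniqueness clause of Theorem~\ref{Hthm} ($J=\Z$) and the ``only the zero sequence'' clause of Theorem~\ref{Hthm3} ($J=\Z^-$): if $(w_i(n))$ is a bounded solution of the homogeneous $i$-th equation (resp.\ a bounded solution of the $i$-th recursion with $w_i(0)=0$), then $(w_i(n),0)$ is a sequence of the same kind for~\eqref{ldedd}, which the corresponding clause forces to vanish; hence $w_i\equiv0$, and uniqueness for~\eqref{iid} follows.

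It remains to treat the auxiliary subspaces. For $J=\Z^+$, the subspace $\mathcal S$ in~\eqref{bndsubs} associated with~\eqref{ldedd} equals $\mathcal S_1\times\mathcal S_2$, where $\mathcal S_i=\{v\in X_i:\ \sup_{n\in\Z^+}\|\Phi_i(n,0)v\|<+\infty\}$; this is immediate from the block-diagonality of $\Phi(n,0)$ and the use of the max-norm. Likewise, for $J=\Z^-$, the subspace $\mathcal U$ of Theorem~\ref{Hthm3} factors as $\mathcal U_1\times\mathcal U_2$. Since $\mathcal S$ (resp.\ $\mathcal U$) is closed, each factor is closed: a sequence in $\mathcal S_1$ converging in $X_1$ yields, after padding with $0$, a sequence in $\mathcal S_1\times\mathcal S_2$ converging in $X_1\times X_2$. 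This is the one place where the Hilbert-space hypothesis enters essentially: a closed subspace of a Hilbert space is complemented by its orthogonal complement, so $\mathcal S_i$ and $\mathcal U_i$ are complemented in $X_i$. All hypotheses of the relevant theorem being met for~\eqref{iid}, it admits an exponential dichotomy, $i=1,2$.

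I do not anticipate a serious obstacle here: modulo the admissibility theorems quoted in Section~\ref{Preliminaries}, the argument is essentially bookkeeping that exploits the fact that~\eqref{ldedd} is a ``product'' of the two systems~\eqref{iid}. The only point worth flagging is that the naive alternative --- restricting the dichotomy projection $P(n)$ of~\eqref{ldedd} to $X_1\times\{0\}$ --- is not available in infinite dimensions, since $P(n)$ need not be block-diagonal; this is precisely why the argument is routed through admissibility (and through the Hilbert-space fact that closed subspaces are complemented), and it previews the strategy used in the rest of the paper.
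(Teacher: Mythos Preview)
Your proposal is correct and follows essentially the same approach as the paper: $(2)\Rightarrow(1)$ via the block-diagonal projection $P(n)=\mathrm{diag}(P_1(n),P_2(n))$, and $(1)\Rightarrow(2)$ by routing through the admissibility Theorems~\ref{Hthm}, \ref{Hthm2}, \ref{Hthm3}, decoupling inputs/outputs blockwise, identifying $\mathcal S=\mathcal S_1\times\mathcal S_2$ (resp.\ $\mathcal U=\mathcal U_1\times\mathcal U_2$), and invoking the Hilbert-space fact that closed subspaces are complemented. The only cosmetic point is that when you write ``$(w_i(n),0)$'' in the uniqueness argument you of course mean the padding with $w_i$ in the $i$-th slot; otherwise your outline matches the paper's proof.
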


\begin{proof}

$(1)\implies (2)$:
We first consider the case when $J=\Z$. Suppose that~\eqref{lde-f} admits an exponential dichotomy. Take an arbitrary bounded sequence $(y_1(n))_{n\in \Z}\subset X_1$ and set $y(n)=(y_1(n), 0)\in X_1\times X_2$, $n\in \Z$. Then $(y(n))_{n\in \Z}$ is a bounded sequence. By Theorem~\ref{Hthm}, there exists a (unique) bounded sequence $(x(n))_{n\in \Z}\subset X_1\times X_2$ such that~\eqref{ADM-1} holds with
\[
A(n)=\begin{pmatrix}
A_{11}(n) & 0 \\
0 & A_{22}(n)
\end{pmatrix}.
\]
Writing $x(n)=(x_1(n), x_2(n))$ with $x_i(n)\in X_i$ for $i\in \{1, 2\}$, we have that 
\[
x_1(n+1)=A_{11}(n)x_1(n)+y_1(n), \quad n\in \Z.
\]
Moreover, the sequence $(x_1(n))_{n\in \Z}\subset X_1$ is bounded. Assume now that $(\tilde x_1(n))_{n\in \Z}\subset X_1$ is another bounded sequence with the property that 
\[
\tilde x_1(n+1)=A_{11}(n)\tilde x_1(n)+y_1(n), \quad n\in \Z.
\]
Let $\tilde x(n)=(\tilde x_1(n), 0)\in X_1\times X_2$, $n\in \Z$. Clearly $(\tilde x(n))_{n\in \Z}\subset X_1\times X_2$ is a bounded sequence and 
\[
\tilde x(n+1)=A(n)\tilde x(n)+y(n), \quad n\in \Z.
\]
This implies (using Theorem~\ref{Hthm}) that $x(n)=\tilde x(n)$, and thus $x_1(n)=\tilde x_1(n)$, for each $n\in \Z$. By Theorem~\ref{Hthm}, we conclude that~\eqref{iid} admits an exponential dichotomy for $i=1$. The same arguments also give the existence of an exponential dichotomy of~\eqref{iid} for $i=2$.

We now consider the case $J=\Z^+$. Proceeding as in the case $J=\Z$, since~\eqref{ldedd} admits an exponential dichotomy,  for each bounded sequence $(y_1(n))_{n\in J}\subset X_1$ there exists a bounded sequence $(x_1(n))_{n\in J}\subset X_1$ such that 
\[
x_1(n+1)=A_{11}(n)x_1(n)+y_1(n), \quad n\in J.
\]
Let $\Phi_{ii}(m, n)$ and $\Phi(m, n)$ denote the cocycles associated with~\eqref{iid} for $i=1, 2$ and~\eqref{ldedd}, respectively. Observe that $\mathcal S=\mathcal S_1\times \mathcal S_2$, where
\begin{equation}\label{mS}
\mathcal S=\left \{(v_1, v_2)\in X_1\times X_2: \ \sup_{n\in J}\|\Phi(n, 0)(v_1, v_2)\|<+\infty \right \}
\end{equation}
and 
\begin{equation}\label{Si}
\mathcal S_i=\left \{v_i\in X_i: \ \sup_{n\in J}\|\Phi_{ii}(n, 0)v_i\|<+\infty \right \}, \quad i=1, 2.
\end{equation}
Since~\eqref{lde-f} admits an exponential dichotomy, we find that $\mathcal S$ is closed, which easily implies that $\mathcal S_1$ is closed. Since $X_1$ is a Hilbert space, we see that $\mathcal S_1$ is complemented. 
Then Theorem~\ref{Hthm2} implies that~\eqref{iid} admits an exponential dichotomy for $i=1$. Analogously, \eqref{iid} admits an exponential dichotomy for $i=2$.

Finally, we treat the case $J=\mathbb Z^-$. Again, proceeding as in case $J=\mathbb Z$, we find that for each bounded sequence $(y_1(n))_{n\in J'}\subset X_1$, there exists a bounded sequence $(x_1(n))_{n\in J}\subset X_1$ satisfying
\[
x_1(n+1)=A_{11}(n)x_1(n)+y_1(n), \quad n\in J'.
\]
Moreover, $\mathcal U=U_1\times \mathcal U_2$, where $\mathcal U$ is the subspace of $X_1\times X_2$ consisting of all $(v_1, v_2)\in X_1\times X_2$ for which there is a bounded sequence $(x(n))_{n\in J}\subset X_1\times X_2$ with $x(0)=(v_1, v_2)$ and 
\begin{equation}\label{832}
x(n)=\begin{pmatrix}
A_{11}(n-1) & 0\\
0 & A_{22}(n-1)
\end{pmatrix}x(n-1) \quad n\in J,
\end{equation} and  $\mathcal U_i$ is the subspace of $X_i$ consisting of all $v_i \in X_i$ for which there is a bounded sequence $(x_i(n))_{n\in J}\subset X_i$ with $x_i(0)=v_i$ and $x_i(n)=A_{ii}(n-1)x_i(n-1)$ for $n\in J$ for $i=1, 2$. Since~\eqref{ldedd} admits an exponential dichotomy, we have that $\mathcal U$ is closed (see Theorem~\ref{Hthm3}). 
Hence, $\mathcal U_1$ is also closed and thus complemented (as $X_1$ is a Hilbert space). Finally, we take  a bounded sequence $(x_1(n))_{n\in J}\subset X_1$ such that $x_1(0)=0$ and $x_1(n)=A_{11}(n-1)x_1(n-1)$ for $n\in J$. Set $x(n):=(x_1(n), 0)\in X_1\times X_2$ for $n\in J$. Note that $(x(n))_{n\in J}$ is bounded, $x(0)=0$ and~\eqref{832} holds. Hence, $x(n)=0$ for $n\in J$ which implies that $x_1(n)=0$ for $n\in J$.
Using Theorem~\ref{Hthm3} we conclude that~\eqref{iid} admits an exponential dichotomy for $i=1$. Similarly, one can deal with the case $i=2$.

$(2)\implies (1)$: If~\eqref{iid} admits an exponential dichotomy on $J$ with projections $P_i(n)$, $n\in J$ for $i=1, 2$, it is straightforward to verify that~\eqref{lde-f} admits an exponential dichotomy with respect to projections 
\[
P(n)=\begin{pmatrix}
P_1(n) & 0\\
0 &P_2(n)
\end{pmatrix}, \quad n\in J.
\]

\end{proof}

\medskip

\section{From  dichotomy of block diagonal systems to dichotomy of block triangular systems}\label{Results1}
In this section, we explore the conditions under which the exponential dichotomy of the block-diagonal system~\eqref{ldedd} implies the existence of an exponential dichotomy for the block-triangular system~\eqref{LDE}. 

The proof of the following result is essentially identical to the proof of~\cite[Theorem 4]{DP} (in the case $J=\Z$). We include it for the sake of completeness. 
\begin{theorem}\label{T1}
Let $J=\mathbb Z$.
Suppose that~\eqref{ldedd} admits an exponential dichotomy and that 
\begin{equation}\label{bnd-1}
    \sup_{n\in J}\|A_{12}(n)\|<+\infty.
\end{equation}
Then, the system~\eqref{LDE} admits an exponential dichotomy.
\end{theorem}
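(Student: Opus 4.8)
The plan is to use the admissibility characterization from Theorem~\ref{Hthm}. Since $J=\Z$, it suffices to show that for every bounded sequence $(y(n))_{n\in\Z}\subset X_1\times X_2$ there exists a unique bounded sequence $(x(n))_{n\in\Z}\subset X_1\times X_2$ satisfying $x(n+1)=A(n)x(n)+y(n)$, where $A(n)$ is the triangular operator in~\eqref{tf}. Write $y(n)=(y_1(n),y_2(n))$ and $x(n)=(x_1(n),x_2(n))$ with $y_i(n),x_i(n)\in X_i$. The triangular equation decouples from the bottom up: the second coordinate satisfies $x_2(n+1)=A_{22}(n)x_2(n)+y_2(n)$, and the first coordinate satisfies $x_1(n+1)=A_{11}(n)x_1(n)+\big(A_{12}(n)x_2(n)+y_1(n)\big)$.

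First I would apply Proposition~\ref{newp}: since~\eqref{ldedd} has an exponential dichotomy, both diagonal systems~\eqref{iid} for $i=1,2$ have exponential dichotomies, hence by Theorem~\ref{Hthm} each is admissible (with uniqueness of the bounded solution). So, given the bounded input $(y_2(n))_{n\in\Z}$, there is a unique bounded $(x_2(n))_{n\in\Z}\subset X_2$ solving $x_2(n+1)=A_{22}(n)x_2(n)+y_2(n)$. Next I would form the new inhomogeneity $z_1(n):=A_{12}(n)x_2(n)+y_1(n)\in X_1$; this is bounded because $\sup_n\|A_{12}(n)\|<+\infty$ by~\eqref{bnd-1}, $(x_2(n))$ is bounded, and $(y_1(n))$ is bounded. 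Applying admissibility of the $i=1$ system to the bounded input $(z_1(n))_{n\in\Z}$ yields a unique bounded $(x_1(n))_{n\in\Z}\subset X_1$ with $x_1(n+1)=A_{11}(n)x_1(n)+z_1(n)$. Then $x(n)=(x_1(n),x_2(n))$ is a bounded solution of $x(n+1)=A(n)x(n)+y(n)$.

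For uniqueness, suppose $(\tilde x(n))=(\tilde x_1(n),\tilde x_2(n))$ is another bounded solution. The second coordinate gives $\tilde x_2(n+1)=A_{22}(n)\tilde x_2(n)+y_2(n)$ with $(\tilde x_2(n))$ bounded, so by uniqueness for the $i=2$ system $\tilde x_2(n)=x_2(n)$ for all $n$. Then the first coordinate of $\tilde x$ solves $\tilde x_1(n+1)=A_{11}(n)\tilde x_1(n)+z_1(n)$ with the same bounded $z_1$, so uniqueness for the $i=1$ system gives $\tilde x_1(n)=x_1(n)$. Hence $(x(n))$ is the unique bounded solution, and by Theorem~\ref{Hthm} the system~\eqref{LDE} admits an exponential dichotomy.

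This argument is essentially free of obstacles in the case $J=\Z$, which is exactly why the authors note it is identical to~\cite[Theorem 4]{DP}: the only structural input needed beyond admissibility is the boundedness of $(A_{12}(n))$, used solely to guarantee that the auxiliary input $(z_1(n))$ remains in $\ell^\infty$. The one place requiring a small amount of care — and the closest thing to a ``main obstacle'' — is ensuring that the norm equivalence between the max-norm and the Hilbert product norm on $X_1\times X_2$ does not interfere with the boundedness bookkeeping; this is immediate since the two norms are equivalent, so ``bounded'' has the same meaning coordinatewise and globally.
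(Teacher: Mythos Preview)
Your proof is correct and follows essentially the same approach as the paper: solve the second coordinate first using the exponential dichotomy of~\eqref{iid} for $i=2$, feed the resulting bounded sequence into the first coordinate via~\eqref{bnd-1}, and invoke Theorem~\ref{Hthm} for both existence and uniqueness. The paper's uniqueness argument writes out the difference $(x-\tilde x)$ explicitly, but this is equivalent to your direct appeal to uniqueness in each coordinate.
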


\begin{proof}
Take an arbitrary bounded sequence $(y(n))_{n\in J}\subset X_1\times X_2$, and write $y(n)=(y_1(n), y_2(n))$ with $y_i(n)\in X_i$ for $n\in J$ and $i=1, 2$.
Clearly, $(y_1(n))_{n\in J}$ and $(y_2(n))_{n\in J}$ are also bounded sequences. Since the system~\eqref{iid} for $i=2$ admits an exponential dichotomy (see Proposition~\ref{newp}), it follows from Theorem~\ref{Hthm} that there exists a  (unique) bounded sequence $(x_2(n))_{n\in J}\subset X_2$ such that
\begin{equation}\label{po}
x_2(n+1)=A_{22}(n)x_2(n)+y_2(n) \quad n\in J.
\end{equation}
Moreover, it follows from~\eqref{bnd-1}  that $(A_{12}(n)x_2(n))_{n\in J}$ is a bounded sequence. Hence, by applying Theorem~\ref{Hthm} to system~\eqref{iid} with $i=1$, we conclude that there exists a 
(unique) bounded sequence $(x_1(n))_{n\in J}$ in $X_1$ satisfying
\begin{equation}\label{po2}
x_1(n+1)=A_{11}(n)x_1(n)+A_{12}(n)x_2(n)+y_1(n) \quad n\in J.
\end{equation}
Then
\[
x(n):=(x_1(n), x_2(n))\in X_1\times X_2, \quad n\in J,
\]
is a bounded solution of \eqref{ADM-1}, where $A(n)$ is as in~\eqref{tf}.

It remains to establish the uniqueness. Suppose that $(\tilde x(n))_{n\in J}$ is another bounded solution of~\eqref{ADM-1}. Then, writing $\tilde{x}(n)=(\tilde{x}_1(n), \tilde{x}_2(n))$ with $\tilde{x}_i(n)\in X_i$ for $i=1, 2$, we have
\[
\begin{split}
(x_1-\tilde{x}_1)(n+1)&=A_{11}(n)(x_1-\tilde{x}_1)(n)+A_{12}(n)(x_2-\tilde{x}_2)(n) \\ (x_2-\tilde{x}_2)(n+1)&=A_{22}(n)(x_2-\tilde{x}_2)(n),
\end{split}
\]
for $n\in J$.
Since~\eqref{iid} admits an exponential dichotomy for $i=2$ (see Proposition~\ref{newp}), from the second equality above we conclude that $x_2(n)=\tilde{x}_2(n)$ for $n\in J$. Then, from the first equality above and the exponential dichotomy of~\eqref{iid} for $i=1$ it follows that $x_1(n)=\tilde{x}_1(n)$ for each $n\in J$. We conclude that $x(n)=\tilde{x}(n)$ for $n\in J$.
Applying Theorem~\ref{Hthm}, we conclude that~\eqref{lde-f} admits an exponential dichotomy.
\end{proof}

We now consider dichotomies on $\Z^+$ and $\Z^-$. 
\begin{theorem}\label{129thm}
Let $J \in \{\Z^+,\Z^-\}$. If~\eqref{ldedd} admits an exponential dichotomy and~\eqref{bnd-1} holds, 
then~\eqref{LDE} admits an exponential dichotomy.
\end{theorem}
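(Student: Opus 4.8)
The plan is to mimic the argument of Theorem \ref{T1}, but now using the correct admissibility characterizations from Theorems \ref{Hthm2} and \ref{Hthm3} (and their refinements in Remarks \ref{120rem} and \ref{1037rem}), which in the infinite-dimensional setting carry the extra burden of verifying that the relevant subspaces $\mathcal S$ or $\mathcal U$ are closed/complemented. I will treat the two cases $J=\Z^+$ and $J=\Z^-$ separately, since the auxiliary subspace conditions are different. In both cases the starting point is Proposition \ref{newp}, which tells us that since \eqref{ldedd} has an exponential dichotomy, both scalar systems \eqref{iid}, $i=1,2$, have exponential dichotomies on $J$.

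\emph{Case $J=\Z^+$.} Given a bounded sequence $(y(n))=(y_1(n),y_2(n))$, I first solve the $i=2$ block: by Theorem \ref{Hthm2} applied to \eqref{iid} with $i=2$ there is a bounded $(x_2(n))$ with $x_2(n+1)=A_{22}(n)x_2(n)+y_2(n)$. By \eqref{bnd-1} the sequence $(A_{12}(n)x_2(n)+y_1(n))$ is bounded, so Theorem \ref{Hthm2} applied to the $i=1$ block yields a bounded $(x_1(n))$ solving $x_1(n+1)=A_{11}(n)x_1(n)+A_{12}(n)x_2(n)+y_1(n)$. Then $x(n)=(x_1(n),x_2(n))$ is a bounded solution of the nonhomogeneous triangular equation, so the admissibility part of Theorem \ref{Hthm2}(ii) holds for \eqref{LDE}. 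It remains to show that the subspace $\mathcal S^{\triangle}$ of $X_1\times X_2$ of points with bounded forward orbit under the triangular cocycle is complemented (by Remark \ref{120rem} complementedness suffices). Let $\mathcal S_i\subset X_i$ be the bounded-orbit subspaces for the diagonal blocks; by Proposition \ref{newp}'s proof (or directly, since the $i$-th system has a dichotomy) each $\mathcal S_i$ is closed, hence complemented in the Hilbert space $X_i$; fix closed complements $Z_i$. The key linear-algebra observation is that a point $(v_1,v_2)$ lies in $\mathcal S^{\triangle}$ iff $v_2\in\mathcal S_2$ (so that $(A_{22}(n)\cdots)$-component, and hence via \eqref{bnd-1} the inhomogeneous term in the first component, stays bounded) and $v_1$ lies in a suitable coset: precisely, $v_2\in\mathcal S_2$ forces the first-component evolution to be a bounded perturbation of the $A_{11}$-evolution, and boundedness of the $x_1$-part then pins $v_1$ down modulo $\mathcal S_1$. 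Writing this out, $\mathcal S^{\triangle}=\{(v_1,v_2): v_2\in\mathcal S_2,\ v_1 - L v_2\in\mathcal S_1\}$ where $L\colon\mathcal S_2\to X_1$ is the bounded linear "stable coupling" operator sending $v_2$ to $x_1(0)$, the value at $0$ of the unique bounded solution of $x_1(n+1)=A_{11}(n)x_1(n)+A_{12}(n)\Phi_{22}(n,0)v_2$ with $x_1(0)\in Z_1$ (uniqueness and boundedness of this solution, and boundedness of $v_2\mapsto x_1(0)$, follow from the dichotomy of the $i=1$ block together with \eqref{bnd-1}, via a closed-graph argument as in Remark \ref{120rem}). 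One checks directly that the map $(v_1,v_2)\mapsto (v_1-Lv_2, v_2)$ is a bounded invertible operator on $X_1\times X_2$ carrying $\mathcal S^{\triangle}$ onto $\mathcal S_1\times\mathcal S_2$, which is complemented; hence $\mathcal S^{\triangle}$ is complemented. Theorem \ref{Hthm2} (via Remark \ref{120rem}) then gives the dichotomy for \eqref{LDE}.

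\emph{Case $J=\Z^-$.} Here one uses Theorem \ref{Hthm3} with Remark \ref{1037rem}. The admissibility step is identical (solve the second block, then the first, using \eqref{bnd-1}); the injectivity condition — the only bounded solution of the homogeneous triangular equation with $x(0)=0$ is zero — follows because the second component solves the homogeneous $A_{22}$-equation with zero initial value, which by the dichotomy of the $i=2$ block (Theorem \ref{Hthm3}, last clause) forces $x_2\equiv 0$, and then the first component solves the homogeneous $A_{11}$-equation with zero initial value, forcing $x_1\equiv 0$. Finally one must show the subspace $\mathcal U^{\triangle}$ of points admitting a bounded backward-extendable orbit is complemented; the argument parallels the $\Z^+$ case, describing $\mathcal U^{\triangle}=\{(v_1,v_2): v_2\in\mathcal U_2,\ v_1 - M v_2\in\mathcal U_1\}$ for an appropriate bounded coupling operator $M\colon\mathcal U_2\to X_1$, reducing complementedness of $\mathcal U^{\triangle}$ to that of $\mathcal U_1\times\mathcal U_2$ via an explicit bounded automorphism of $X_1\times X_2$.

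\emph{Main obstacle.} The routine part is the admissibility; the genuine work — and the place where the infinite-dimensional setting differs from \cite{DP} — is verifying that $\mathcal S^{\triangle}$ (resp. $\mathcal U^{\triangle}$) is complemented. The crux is to produce the bounded coupling operator $L$ (resp. $M$): one needs a canonical bounded selection of a bounded solution of the first-block equation driven by the second block, which requires a closed-graph/open-mapping argument of the type carried out in Remark \ref{120rem}, and then to recognize that this operator conjugates $\mathcal S^{\triangle}$ to the manifestly complemented product $\mathcal S_1\times\mathcal S_2$. Getting the boundedness of $L$ and the invertibility of the conjugating map right, uniformly, is where care is needed.
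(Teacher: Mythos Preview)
Your proposal is correct and follows essentially the same line as the paper: both establish admissibility exactly as in Theorem~\ref{T1} and then show $\mathcal S$ is complemented by constructing a bounded coupling operator $L$ from the dichotomy of the $i=1$ block acting on the bounded $A_{12}$-forcing. The only cosmetic difference is that the paper writes $L$ down as an explicit convergent series $L\eta=-\sum_{k\ge 1}\Phi_{11}(0,k)(\Id_1-P_{11}(k))A_{12}(k-1)\Phi_{22}(k-1,0)\eta$ (so boundedness is read off directly from the dichotomy estimates and~\eqref{bnd-1}) and then exhibits a block upper-triangular projection with range $\mathcal S$, whereas you define $L$ abstractly, invoke a closed-graph argument, and use $L$ to conjugate $\mathcal S^{\triangle}$ onto $\mathcal S_1\times\mathcal S_2$; note only that for your conjugating map to be globally defined you must extend $L$ from $\mathcal S_2$ to $X_2$ via $LP_{22}(0)$, which is immediate.
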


\begin{proof}
Let $J=\Z^+.$ The proof for the case $J=\Z^-$ is similar. As in the proof of the previous theorem, we conclude that for each bounded sequence $(y(n))_{n\in J}\subset X_1\times X_2$, there exists a bounded sequence $(x(n))_{n\in J}\subset X_1\times X_2$ in $X_1 \times X_2$ satisfying~\eqref{ADM2-2} with $A(n)$ as in~\eqref{tf}. We now claim that 
\begin{equation}\label{S}
\mathcal S:=\left \{(x_1, x_2)\in X_1\times X_2: \ \sup_{n\in J} \|\Phi(n, 0)(x_1, x_2) \|<+\infty \right \}
\end{equation}
is a closed (and complemented) subspace of $X_1\times X_2$, where $\Phi(m, n)$ denotes the cocycle associated with~\eqref{LDE}.
Observe that $((x_1(n), x_2(n)))_{n\in J}\in X_1\times X_2$ is a bounded solution of~\eqref{LDE} if and only if $x_2(n)=\Phi_{22}(n, 0)\eta$ and 
\[
\begin{split}
x_1(n) &=\Phi_{11}(n, 0)\xi+\sum_{k=1}^n \Phi_{11}(n, k)P_{11}(k)A_{12}(k-1)x_2(k-1) \\
&\phantom{=}-\sum_{k=n+1}^\infty \Phi_{11}(n, k)(\Id_1-P_{11}(k))A_{12}(k-1)x_2(k-1),
\end{split}
\]
for $n\in J$ with $\eta \in \mathcal RP_{22}(0)$ and $\xi \in \mathcal RP_{11}(0)$, where $P_{ii}(n)$, $n\in J$ are projections associated with the exponential dichotomy of~\eqref{iid}, $\Phi_{ii}(m, n)$ is the cocycle associated with~\eqref{iid} for $i=1, 2$ and $\Id_1$ denotes the identity on $X_1$. This follows from the observation that the sequence $(x_1(n))_{n\in J}$ is the unique solution of 
\[
x_1(n+1)=A_{11}(n)x_1(n)+A_{12}(n)x_2(n), \quad n\in J
\]
satisfying $P_{11}(0)x_1(0)=\xi$. Taking the above into account, we conclude that \[
\mathcal S=\{(\xi+L\eta, \eta): \ \xi\in \mathcal RP_{11}(0), \ \eta\in \mathcal RP_{22}(0) \},
\]
where $L\colon \mathcal RP_{22}(0)\to \mathcal NP_{11}(0)$ is given by 
\[
L\eta:=-\sum_{k=1}^\infty \Phi_{11}(0, k)(\Id_1-P_{11}(k))A_{12}(k-1)\Phi_{22}(k-1, 0)\eta, \quad \eta \in \mathcal RP_{22}(0).
\]
It follows easily from the dichotomy estimates related to~\eqref{iid} for $i=1, 2$ and~\eqref{bnd-1} that $L$ is a bounded linear operator. We consider $P\in \mathcal L(X_1\times X_2)$ given by 
\[
P=\begin{pmatrix}
P_{11}(0) & LP_{22}(0) \\
0& P_{22}(0)
\end{pmatrix}.
\]
As $P_{11}(0)L=0$, one can easily verify that $P$ is a projection. Moreover, we clearly have $\mathcal S=\mathcal RP$, which establishes the desired claim. \end{proof}

\begin{remark}\label{Remark5}
Compared with the proof of~\cite[Theorem 4]{DP} (in the case $J=\Z^+$), in the proof of Theorem~\ref{129thm} it was necessary to show that $\mathcal S$ in~\eqref{S} is closed (and complemented) so that we can apply Theorem~\ref{Hthm2}.

\end{remark}

\section{From exponential dichotomy of block triangular systems to exponential dichotomy of block diagonal systems}\label{Results2}

We are now interested in formulating conditions under which the existence of an exponential dichotomy for~\eqref{LDE} implies the existence of an exponential dichotomy of~\eqref{ldedd}. The following examples will illustrate that the situation is much more delicate than in the finite-dimensional case.
\begin{example}
Let $\ell_2$ denote the space of all one-sided sequences $(z_n)_{n\in \Z^+}\subset \mathbb C$ such that $\sum_{n=0}^\infty |z_n|^2<+\infty$, and recall that  $\ell_2$ is a Hilbert space  with respect to the scalar product
\[
\langle (z_0, z_1, \ldots), (w_0, w_1, \ldots)\rangle=\sum_{n=0}^\infty z_n \overline{w}_n.
\]
Let $U\colon \ell_2 \to \ell_2$ be the unilateral shift operator defined by
\[
U(z_0, z_1, \ldots, )=(0, z_0, z_1, \ldots), \quad (z_0, z_1, \ldots)\in \ell_2.
\] Finally, we consider $C\colon \ell_2\to \ell_2$  given by
\[
C(x_0, x_1, \ldots, )=(x_0, 0, 0, \ldots), \quad (x_0, x_1, \ldots)\in \ell_2.
\]
Observe that $U$ and $C$ are bounded linear operators. 
Then, we have (see~\cite[Example 3]{KJ}) \[
\sigma \left (\begin{pmatrix}
U & C\\
0 & U^*
\end{pmatrix}
\right )\subset \{z\in \mathbb C: \ |z|=1\},
\]
and
\[
\sigma(U)=\sigma(U^*)=\{z\in \mathbb C: \ |z| \le 1\}.
\]
We take an arbitrary $\lambda >1$ and set
\[
A_{11}=A_{11}(n)=\lambda U, \quad A_{22}=A_{22}(n)=\lambda U^* \quad \text{and} \quad  A_{12}=A_{12}(n)=\lambda C,
\]
for $n\in J\in \{\Z, \Z^+, \Z^-\}$.
 Then, \eqref{LDE} admits an exponential dichotomy, since the spectrum of $A=A(n)$ as in~\eqref{tf} is disjoint from the unit circle in $\mathbb C$, 
 while~\eqref{iid} does not admit an exponential dichotomy for $i=1, 2$ as the spectra of $A_{ii}$ intersect the unit circle. 

 This example shows that the existence of an exponential dichotomy of~\eqref{LDE} in general does not imply the existence of an exponential dichotomy of~\eqref{ldedd} even in the autonomous case. 
\end{example}

Next, we provide a similar example in which the operators $A_{ii}=A_{ii}(n)$ are invertible for $i=1, 2$. This example was communicated to us by Nikolaos Chalmoukis using MathOverflow.

 \begin{example}\label{EX2}
We consider the space $X=L^2(\partial A_r, |dz|)$ of square-integrable functions with respect to the arc length, where $A_r=\{z \in \mathbb C : r^{-1} < |z| < r \}$ is the annulus for some $r>1$. Let $H^2(\partial A_r)$ be the Hardy space of the annulus, i.e. the closure of the span of the set of functions $\{z \mapsto z^n : n \in \mathbb Z\}$. We consider $A \colon X \to X$ to be the multiplication operator by $z$, that is,  $$A f(z)=z f(z), \text{ for every } f \in X.$$ It is well known that $\sigma(A)=\partial A_r$. Since $H^2(\partial A_r)$ is $A$-invariant, we can decompose $A$ into the following block upper triangular form with respect to the splitting $X=H^2(\partial A_r)\oplus H^2(\partial A_r)^\perp$:
$$A= \begin{pmatrix}
    A_{11} & A_{12} \\ 0 & A_{22}
\end{pmatrix}.$$
It is not difficult to show that $\sigma(A_{11})=\sigma (A_{22})=\overline{A_r}$ (see~\cite{NC} for details). 

We now consider~\eqref{LDE} with $A(n)=A$ for $n\in J\in \{\Z, \Z^-, \Z^+\}$. Since $\sigma(A_{11})=\sigma(A_{22})=\overline{A_r}$ we find that \eqref{iid} does not admit an exponential dichotomy for $i=1,2$ (as $\overline{A_r}$ intersects the unit circle in the complex plane), while \eqref{LDE} admits an exponential dichotomy (since $\sigma(A)=\partial A_r$ does not intersect the unit circle in the complex plane). 

 \end{example}

Despite the above examples, we will formulate a series of results that give additional conditions under which the existence of an exponential dichotomy for~\eqref{LDE} implies the existence of an exponential dichotomy for~\eqref{ldedd}.

\subsection{Dichotomies on $\Z$}
We begin with the following result whose proof is identical to the proof of~\cite[Theorem 6]{DP}. It is included for the sake of completeness. 
\begin{theorem}\label{tconv1}
Let $J=\Z$ and suppose that~\eqref{LDE} admits an exponential dichotomy. Then, \eqref{ldedd} admits an exponential dichotomy if and only if~\eqref{iid} admits no nonzero bounded solutions for $i=2$. 
\end{theorem}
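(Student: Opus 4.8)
The strategy is to use the admissibility characterization of Theorem~\ref{Hthm}, since we are in the case $J=\Z$. I would split into the two implications. For the ``only if'' direction: if \eqref{ldedd} has an exponential dichotomy then by Proposition~\ref{newp} so does \eqref{iid} for $i=2$, and an exponential dichotomy on $\Z$ admits no nonzero bounded solution (if $(x(n))_{n\in\Z}$ is a bounded solution, write $x(n)=\Phi(n,m)x(m)$ and let $m\to\pm\infty$ using the dichotomy estimates on the stable/unstable parts to force $x(n)=0$). For the ``if'' direction: assume \eqref{LDE} has an exponential dichotomy and \eqref{iid} has no nonzero bounded solution for $i=2$. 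By Proposition~\ref{newp} it suffices to show \eqref{iid} admits an exponential dichotomy for each $i$, and by Theorem~\ref{Hthm} this amounts to showing that for every bounded input there is a \emph{unique} bounded solution of the corresponding nonhomogeneous equation.

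First handle $i=2$. Given a bounded sequence $(y_2(n))_{n\in\Z}\subset X_2$, set $y(n)=(0,y_2(n))\in X_1\times X_2$; this is bounded, so by the exponential dichotomy of \eqref{LDE} and Theorem~\ref{Hthm} there is a unique bounded $(x(n))=( (x_1(n),x_2(n)) )$ solving \eqref{ADM-1} with $A(n)$ as in~\eqref{tf}. Reading off the second coordinate, $x_2(n+1)=A_{22}(n)x_2(n)+y_2(n)$, so a bounded solution exists. For uniqueness: the difference of two bounded solutions of the $X_2$-equation is a bounded solution of \eqref{iid} for $i=2$, hence zero by hypothesis. Thus \eqref{iid} admits an exponential dichotomy for $i=2$.

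Now handle $i=1$. Given a bounded $(y_1(n))_{n\in\Z}\subset X_1$, set $y(n)=(y_1(n),0)$; as above obtain a unique bounded $(x_1(n),x_2(n))$ solving \eqref{ADM-1}. The second coordinate gives that $(x_2(n))$ is a bounded solution of \eqref{iid} for $i=2$, hence $x_2(n)\equiv 0$; then the first coordinate reads $x_1(n+1)=A_{11}(n)x_1(n)+y_1(n)$, so a bounded solution of the $X_1$-equation exists. For uniqueness, suppose $(x_1(n))$ and $(\tilde x_1(n))$ are bounded solutions of the $X_1$-equation with input $y_1$; then $((x_1-\tilde x_1)(n),0)$ is a bounded solution of the homogeneous block-triangular system \eqref{lde-f} with $A(n)$ as in~\eqref{tf}, and since \eqref{LDE} has an exponential dichotomy (which has no nonzero bounded solution on $\Z$, by the argument in the ``only if'' direction applied to \eqref{LDE}), we get $x_1=\tilde x_1$. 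By Theorem~\ref{Hthm}, \eqref{iid} admits an exponential dichotomy for $i=1$. Applying Proposition~\ref{newp} completes the proof.

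\textbf{Main obstacle.} The delicate point is really the logical bookkeeping of where the hypothesis ``no nonzero bounded solution for $i=2$'' gets used versus where the dichotomy of the full triangular system does the work: existence of bounded solutions comes for free from admissibility of \eqref{LDE}, uniqueness for the $i=1$ equation comes from absence of nonzero bounded solutions of the (block-triangular, hence in particular) system \eqref{lde-f}, and the $i=2$ hypothesis is exactly what is needed to both pin down $x_2\equiv 0$ in the $i=1$ argument and to get uniqueness in the $i=2$ argument. There is no serious analytic difficulty here since $J=\Z$, which is precisely why the authors state the proof is identical to~\cite[Theorem 6]{DP}; the work in this paper is concentrated in the half-line cases treated later.
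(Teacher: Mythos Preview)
Your proposal is correct and follows essentially the same approach as the paper's proof: both directions use Theorem~\ref{Hthm} together with Proposition~\ref{newp}, lifting bounded inputs $(y_i(n))$ to $X_1\times X_2$, reading off coordinates, and invoking the hypothesis on~\eqref{iid} for $i=2$ in exactly the two places you identify. The only cosmetic difference is that for uniqueness in the $i=1$ case the paper phrases the argument via the uniqueness clause of Theorem~\ref{Hthm} for~\eqref{LDE} (both $(x_1(n),0)$ and $(\tilde x_1(n),0)$ solve~\eqref{ADM-1} with the same input), whereas you phrase it via the equivalent statement that~\eqref{LDE} has no nonzero bounded homogeneous solution.
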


\begin{proof}
The necessity is obvious.
For sufficiency, 
using Proposition~\ref{newp}, it suffices to show that~\eqref{iid} admits an exponential dichotomy for $i=1,2$. We first deal with the case $i=2$.
Let $(y_2(n))_{n\in J}\subset X_2$ be an arbitrary bounded sequence in $X_2$. Set $y(n):=(0, y_2(n))\in X_1\times X_2$ for $n\in J$. Then $(y(n))_{n\in J}\subset X_1\times X_2$ is a bounded sequence.  Since~\eqref{LDE} admits an exponential dichotomy, by Theorem~\ref{Hthm} there exists a (unique) bounded sequence $(x(n))_{n\in J}\subset X_1\times X_2$ such that~\eqref{ADM-1} holds with $A(n)$ as in~\eqref{tf}. In particular, by
writing $x(n)=(x_1(n), x_2(n))$, where $x_i(n)\in X_i$ for $i=1, 2$,  we have 
\[
x_2(n+1)=A_{22}(n)x_2(n)+y_2(n), \quad n\in J.
\]
Moreover,  $(x_2(n))_{n\in J}$ is bounded. Hence, since~\eqref{iid} for $i=2$ does not admit nonzero bounded solutions, we conclude (using Theorem~\ref{Hthm}) that it does admit an exponential dichotomy.

We now turn to~\eqref{iid} for $i=1$.
Take an arbitrary bounded sequence $(y_1(n))_{n\in J}\subset X_1$. Set $y(n):=(y_1(n), 0)\in X_1\times X_2$ for $n\in J$. Then $(y(n))_{n\in J}$ is a bounded sequence. Since~\eqref{LDE} admits an exponential dichotomy, there exists a (unique) bounded sequence $(x(n))_{n\in J}\subset X_1\times X_2$ satisfying~\eqref{ADM-1}.
Writing $x(n)=(x_1(n), x_2(n))$, where $x_i(n)\in X_i$ for $i=1, 2$, we have that 
\begin{equation}\label{ab1-1}
x_1(n+1)=A_{11}(n)x_1(n)+A_{12}(n)x_2(n)+y_1(n) 
\end{equation}
and
\begin{equation}\label{ab2-2}
x_2(n+1)=A_{22}(n)x_2(n),
    \end{equation}
for $n\in J$. Since $(x_2(n))_{n\in \Z}$ is bounded, \eqref{ab2-2} together with our assumption that~\eqref{iid} for $i=2$ does not admit nonzero bounded solutions implies that $x_2(n)=0$ for $n\in J$. Hence, \eqref{ab1-1} reduces to
\begin{equation}\label{y-1}
x_1(n+1)=A_{11}(n)x_1(n)+y_1(n) \quad n\in J,
\end{equation}
where the sequence $(x_1(n))_{n\in J}$ is bounded. Suppose that there exists another bounded sequence $(\tilde x_1(n))_{n\in J}\subset X_1$ such that 
\[
\tilde x_1(n+1)=A_{11}(n) \tilde x_1(n)+y_1(n) \quad n\in J.
\]
Then,
\[
(\tilde x_1(n+1), 0)=A(n)(\tilde x_1(n), 0)+y(n), \quad n\in J.
\]
Furthermore, \eqref{y-1} implies that 
\[
(x_1(n+1), 0)=A(n)( x_1(n), 0)+y(n), \quad n\in J.
\]
Since~\eqref{LDE} admits an exponential dichotomy, we conclude that $x_1(n)=\tilde x_1(n)$ for all $n\in J$. By Theorem~\ref{Hthm}, we conclude that~\eqref{iid} for $i=1$ admits an exponential dichotomy. The proof of the theorem is complete.
\end{proof}

\subsection{Dichotomies on $\Z^+$ and $\Z^-$}
Before formulating versions of Theorem~\ref{tconv1} for $J\in \{\Z^+, \Z^-\}$, we will establish two auxiliary lemmas.

\begin{lemma}\label{diciid2}
Let $J=\Z^+$, and assume that~\eqref{LDE} admits an exponential dichotomy. If $\mathcal S_2$ given by~\eqref{Si} for $i=2$ is complemented in $X_2$, then~\eqref{iid} admits an exponential dichotomy for $i=2$.
\end{lemma}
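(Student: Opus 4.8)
The goal is to verify the two conditions in Theorem~\ref{Hthm2} for the scalar system~\eqref{iid} with $i=2$, namely the admissibility of bounded sequences and the fact that $\mathcal S_2$ is closed and complemented. Since $X_2$ is a Hilbert space, once we know $\mathcal S_2$ is closed it is automatically complemented; and by Remark~\ref{120rem} (or Remark~\ref{1037rem}) it in fact suffices to verify admissibility together with the hypothesis that $\mathcal S_2$ is complemented, which is exactly what we are given. So the only real work is to establish the admissibility statement for~\eqref{iid}, $i=2$: for every bounded sequence $(y_2(n))_{n\in \Z^+}\subset X_2$ there is a bounded sequence $(x_2(n))_{n\in \Z^+}\subset X_2$ with $x_2(n+1)=A_{22}(n)x_2(n)+y_2(n)$.

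\textbf{Key steps.} First I would take an arbitrary bounded sequence $(y_2(n))_{n\in \Z^+}\subset X_2$ and lift it to $X_1\times X_2$ by setting $y(n):=(0,y_2(n))$, which is bounded. Since~\eqref{LDE} admits an exponential dichotomy, Theorem~\ref{Hthm2} (the easy direction (i)$\Rightarrow$(ii)) — or more directly the characterization of dichotomy via the dichotomy projections, using the variation-of-constants formula — gives a bounded sequence $(x(n))_{n\in \Z^+}\subset X_1\times X_2$ satisfying $x(n+1)=A(n)x(n)+y(n)$ with $A(n)$ as in~\eqref{tf}. Writing $x(n)=(x_1(n),x_2(n))$ with $x_i(n)\in X_i$, the lower block of this identity reads $x_2(n+1)=A_{22}(n)x_2(n)+y_2(n)$, and $(x_2(n))_{n\in \Z^+}$ is bounded because $(x(n))_{n\in \Z^+}$ is. This gives the admissibility for~\eqref{iid}, $i=2$. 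Combined with the hypothesis that $\mathcal S_2$ is complemented, Theorem~\ref{Hthm2} together with Remark~\ref{120rem} yields that~\eqref{iid} admits an exponential dichotomy for $i=2$.

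\textbf{Main obstacle.} There is essentially no deep obstacle here; the statement is designed as a routine lemma separating out the delicate hypothesis (complementedness of $\mathcal S_2$) that does not come for free in infinite dimensions. The one point that requires a little care is making sure that the admissibility we extract for the $X_2$-component is genuinely the full admissibility statement for~\eqref{iid} on $\Z^+$ — i.e. that we may choose the $X_1$-input to be zero and that boundedness of the pair $(x_1(n),x_2(n))$ implies boundedness of $(x_2(n))$, both of which are immediate with the $\max$-norm on $X_1\times X_2$ adopted in the Setup. I would also note that no uniqueness of $(x_2(n))$ is needed at this stage: Theorem~\ref{Hthm2} for $\Z^+$ only requires existence of a bounded solution plus the structural condition on $\mathcal S_2$, and the latter is precisely the hypothesis of the lemma. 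Hence the proof is short: lift, solve, project, invoke Theorem~\ref{Hthm2} via Remark~\ref{120rem}.
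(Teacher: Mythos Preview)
Your proposal is correct and follows essentially the same route as the paper: lift $(y_2(n))$ to $(0,y_2(n))$, use the exponential dichotomy of~\eqref{LDE} to get a bounded solution, project to the second component to obtain admissibility for~\eqref{iid} with $i=2$, and then invoke Theorem~\ref{Hthm2} together with Remark~\ref{120rem} using the complementedness of $\mathcal S_2$. The paper phrases the first step as ``arguing as in the proof of Theorem~\ref{tconv1}'', which is exactly your lift--solve--project argument.
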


\begin{proof} Arguing as in the proof Theorem~\ref{tconv1}, we have that for each bounded sequence $(y_2(n))_{n\in J}\subset X_2$, there exists a bounded sequence $(x_2(n))_{n\in J}$ such that 
\[
x_2(n+1)=A_{22}(n)x_2(n)+y_2(n), \quad n\in J.
\]
Since $\mathcal S_2$ is complemented in $X_2$, it follows from Theorem~\ref{Hthm2} and Remark~\ref{120rem} that~\eqref{iid} admits an exponential dichotomy for $i=2$.
\end{proof}

\begin{lemma}\label{neglin}
Let $J=\Z^-$ and assume that~\eqref{LDE} admits an exponential dichotomy. Furthermore, suppose that the following holds:
\begin{enumerate}
\item the subspace $\mathcal U_2$ of $X_2$ consisting of all $v\in X_2$ for which there is a bounded sequence $(x_2(n))_{n\in J}$ with $x_2(0)=v$ and $x_2(n)=A_{22}(n-1)x_2(n-1)$ for $n\le 0$ is complemented in $X_2$;
\item there are no nonzero bounded solutions $(x_2(n))_{n\in J}$ of~\eqref{iid} for $i=2$ such that $x_2(0)=0$.
\end{enumerate}
Then \eqref{iid} admits an exponential dichotomy for $i=2$.

\end{lemma}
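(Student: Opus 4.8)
The plan is to mimic the structure of Lemma~\ref{diciid2}, but using Theorem~\ref{Hthm3} (together with Remark~\ref{1037rem}) in place of Theorem~\ref{Hthm2}. Concretely, I must verify the two ingredients of condition (ii) in Theorem~\ref{Hthm3} for the scalar equation~\eqref{iid} with $i=2$: namely, (a) the admissibility property on $\Z^-$, i.e. that every bounded sequence $(y_2(n))_{n\in J'}\subset X_2$ is the ``output'' of some bounded sequence via the nonhomogeneous equation~\eqref{ADM3-3}; and (b) the uniqueness statement, i.e. that the only bounded sequence $(x_2(n))_{n\in J}$ with $x_2(0)=0$ and $x_2(n)=A_{22}(n-1)x_2(n-1)$ for $n\in J'$ is the zero sequence. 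Assumption (2) of the lemma is exactly (b), and assumption (1) is precisely the hypothesis that $\mathcal U_2$ is complemented, which Remark~\ref{1037rem} allows us to use in lieu of requiring $\mathcal U_2$ to be closed. So the only substantive work is to establish the admissibility property (a).

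For (a), I would argue exactly as in the proof of Theorem~\ref{tconv1} and Lemma~\ref{diciid2}: given a bounded sequence $(y_2(n))_{n\in J'}\subset X_2$, set $y(n):=(0,y_2(n))\in X_1\times X_2$ for $n\in J'$. Since~\eqref{LDE} admits an exponential dichotomy on $\Z^-$, Theorem~\ref{Hthm3} (applied to the full triangular system) guarantees a bounded sequence $(x(n))_{n\in J}\subset X_1\times X_2$ satisfying~\eqref{ADM3-3} with $A(n)$ as in~\eqref{tf}. Writing $x(n)=(x_1(n),x_2(n))$ with $x_i(n)\in X_i$, the second block component reads
\[
x_2(n+1)=A_{22}(n)x_2(n)+y_2(n), \quad n\in J',
\]
and $(x_2(n))_{n\in J}$ is bounded because $(x(n))_{n\in J}$ is. This is precisely the required admissibility for~\eqref{iid} with $i=2$.

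Having established (a), together with assumptions (1) and (2), condition (ii) of Theorem~\ref{Hthm3} holds for~\eqref{iid} with $i=2$ — invoking Remark~\ref{1037rem} to replace ``closed and complemented'' by merely ``complemented'' — and hence~\eqref{iid} admits an exponential dichotomy for $i=2$, which is the assertion of the lemma. I do not anticipate any real obstacle here: the proof is a routine adaptation of the $\Z^+$ case, the extraction of the second coordinate being the only computation. The one point requiring a little care is bookkeeping of the index sets: the nonhomogeneous equation on $\Z^-$ lives on $J'=\Z^-\setminus\{0\}$ while the solution lives on $J=\Z^-$, and the subspace $\mathcal U_2$ is defined via the homogeneous recursion $x_2(n)=A_{22}(n-1)x_2(n-1)$ for $n\le 0$; these match the hypotheses of Theorem~\ref{Hthm3} verbatim, so no adjustment is needed beyond being consistent with the notation in condition~(1) and (2) of the lemma statement.
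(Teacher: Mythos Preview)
Your proposal is correct and follows essentially the same approach as the paper: the paper's proof simply states that it is analogous to Lemma~\ref{diciid2}, using Theorem~\ref{Hthm3} and Remark~\ref{1037rem} in place of Theorem~\ref{Hthm2}, which is precisely what you carry out in detail. Your explicit verification of the admissibility part (a) by embedding $(y_2(n))$ as $(0,y_2(n))$ and extracting the second component is exactly the argument alluded to by ``arguing as in the proof of Theorem~\ref{tconv1}''.
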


\begin{proof}
The proof is analogous to the proof of the previous lemma, using Theorem~\ref{Hthm3} (and Remark~\ref{1037rem}) instead of Theorem~\ref{Hthm2}. 
\end{proof}

\begin{theorem}\label{thm7}
Let $J=\mathbb Z^+$ and suppose that~\eqref{LDE} admits an exponential dichotomy with projections $P(n)$ such that $d:=\dim \mathcal N P(0)<+\infty$. Furthermore, let $\mathcal S_i$, $i=1, 2$ be given by~\eqref{Si}.  Then, the following holds:
\begin{enumerate}
    \item  $\mathcal S_2$ is complemented and closed,  $d_2:=\codim \mathcal S_2 \le d$ and~\eqref{iid} admits an exponential dichotomy for $i=2$;
    \item $\mathcal S_1$ given by~\eqref{Si} for $i=1$ is closed and complemented;
    \item \eqref{ldedd} admits an exponential dichotomy if and only if
    \begin{equation}\label{dddeq}d=d_1+d_2,\end{equation} where $d_1:=\codim \mathcal S_1$.
    \end{enumerate} 
\end{theorem}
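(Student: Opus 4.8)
The three items should be proved in order, with item (1) doing most of the structural work. For item (1), I would start from the admissibility property for the triangular system~\eqref{LDE}: given a bounded input $(y_2(n))_{n\in J}$ in $X_2$, set $y(n)=(0,y_2(n))$ and solve~\eqref{ADM2-2} for $(x(n))=((x_1(n),x_2(n)))$; the second component gives a bounded solution of $x_2(n+1)=A_{22}(n)x_2(n)+y_2(n)$, so the admissibility half of Theorem~\ref{Hthm2} holds for~\eqref{iid} with $i=2$. To get the exponential dichotomy I must show $\mathcal S_2$ is complemented (then Lemma~\ref{diciid2} applies). The key observation is that the dichotomy subspace $\mathcal S$ of~\eqref{LDE} has a block structure: writing $\mathcal S=\mathcal RP(0)$, the projection $P(0)$ applied to vectors of the form $(0,v)$ and $(w,0)$ exhibits $\mathcal S$ as $\{(\xi+Tv,v):v\in\mathcal S_2,\ \xi\in\mathcal S_1^{P}\}$ for a suitable operator $T$, where $\mathcal S_1^{P}$ is the first-coordinate slice of $\mathcal S$. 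More directly: $\mathcal S_2=\pi_2(\mathcal S)$ where $\pi_2$ is the projection onto the second factor, and $\ker\pi_2|_{\mathcal S}=\mathcal S\cap(X_1\times\{0\})$. Since $\mathcal N P(0)$ is finite-dimensional with $\dim=d$, the codimension of $\mathcal S$ in $X_1\times X_2$ is $d$; projecting via $\pi_2$, the image $\mathcal S_2$ has codimension in $X_2$ at most $d$, hence is closed and, being a finite-codimension closed subspace of a Hilbert space, complemented. This gives $d_2:=\codim\mathcal S_2\le d$ and, via Lemma~\ref{diciid2}, the dichotomy for $i=2$.

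For item (2), once~\eqref{iid} has an exponential dichotomy for $i=2$, I would run the argument in the proof of Theorem~\ref{129thm} essentially verbatim: solving the triangular equation coordinatewise expresses any bounded solution of~\eqref{LDE} in terms of the dichotomies of the diagonal blocks, and $\mathcal S$ decomposes as $\{(\xi+L\eta,\eta):\xi\in\mathcal RP_{11}(0),\ \eta\in\mathcal RP_{22}(0)\}$ with $L$ bounded. But here it is cleaner to observe directly that $\mathcal S_1=\pi_1(\mathcal S\cap(X_1\times\{0\}))$ up to the action of $L$; in fact $\mathcal S_1=\mathcal RP_{11}(0)$ is automatically closed and complemented because it IS the stable space of a genuine dichotomy for the block $A_{11}$ — wait, that is circular since we are trying to establish that dichotomy. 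Instead I would argue: the bounded solutions of $x_1(n+1)=A_{11}(n)x_1(n)$ are exactly $\{\Phi_{11}(n,0)\xi:\xi\in\mathcal S_1\}$, and from the decomposition $\mathcal S=\{(\xi+L\eta,\eta)\}$ together with the already-known closedness of $\mathcal S$ (Theorem~\ref{Hthm2}) and boundedness of $L$, closedness and complementedness of $\mathcal S_1$ in $X_1$ follow from a diagram-chase using that $L$ maps into $\mathcal NP_{11}(0)$ and $P_{11}(0)L=0$; $\mathcal S_1$ is the image of $\mathcal S$ under the bounded projection $(u,v)\mapsto u-Lv$ restricted appropriately, and one extracts a bounded projection of $X_1$ onto $\mathcal S_1$.

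For item (3), I would use Proposition~\ref{newp}: \eqref{ldedd} has a dichotomy iff both~\eqref{iid}, $i=1,2$, do; by item (1) the case $i=2$ is settled unconditionally, so~\eqref{ldedd} has a dichotomy iff~\eqref{iid} has one for $i=1$. Now I invoke Theorem~\ref{Hthm2} for the block $A_{11}$: the admissibility half holds (solve the first coordinate using $y(n)=(y_1(n),0)$ in~\eqref{LDE}, noting $x_2$ then solves the homogeneous diagonal equation and the bounded solution space of that is governed by the $i=2$ dichotomy), and $\mathcal S_1$ is closed and complemented by item (2); so~\eqref{iid} for $i=1$ ALWAYS has a dichotomy — but that would make~\eqref{ldedd} unconditionally dichotomic, contradicting Example~\ref{EX2}. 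The resolution, and the genuine content of (3), is the uniqueness clause: Theorem~\ref{Hthm2} requires only existence of a bounded solution, but to transfer the solution $(x_1(n))$ built inside the triangular system back to a legitimate admissibility statement for $A_{11}$ alone, one needs $x_2(n)\equiv 0$, which forces a dimension bookkeeping. The correct statement is: $\codim\mathcal S$ (which equals $d$) relates to $\codim\mathcal S_1$ and $\codim\mathcal S_2$ through the operator $L$; from $\mathcal S=\{(\xi+L\eta,\eta)\}$ one computes $\codim_{X_1\times X_2}\mathcal S = \codim_{X_1}\mathcal RP_{11}(0) + \codim_{X_2}\mathcal RP_{22}(0)$ when these are the true dichotomy spaces, i.e. $d=d_1+d_2$, whereas in general the triangular dichotomy's unstable dimension $d$ could be strictly smaller than $d_1+d_2$ if the diagonal blocks fail to be dichotomic. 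So the equivalence is: \eqref{ldedd} has a dichotomy $\iff$ the $A_{11}$-block has one $\iff$ the unstable dimensions add up, \eqref{dddeq}. I would prove the forward direction by the dimension count just described, and the reverse by showing~\eqref{dddeq} forces the candidate bounded solution of the $A_{11}$-equation to be unique — equivalently $\mathcal S_1$ together with the finite-dimensional complement matching $d_1$ reconstructs a dichotomy.

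**Main obstacle.** The delicate point is item (3)'s reverse implication and, underlying it, pinning down exactly why admissibility plus closedness/complementedness of $\mathcal S_1$ is NOT automatically enough for a dichotomy of the $A_{11}$-block — i.e. identifying precisely where the hypothesis~\eqref{dddeq} enters. I expect the resolution to hinge on a careful analysis of the finite-dimensional quotient: $\mathcal N P(0)$ has dimension $d$, it surjects onto the complement of $\mathcal S_2$ (dimension $d_2$) with kernel inside $X_1\times\{0\}$ of dimension $d-d_2$, and this kernel must be compared with the complement of $\mathcal S_1$ (dimension $d_1$); the condition $d=d_1+d_2$ is exactly what makes $\mathcal N P(0)\cap(X_1\times\{0\})$ a genuine unstable space for $A_{11}$ rather than merely containing it, which in turn is what upgrades "a bounded solution exists" to "the dichotomy holds" via Theorem~\ref{Hthm2}. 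Getting the bookkeeping airtight — especially the invertibility requirement (i) in the definition of exponential dichotomy on the finite-dimensional unstable piece — is where I would spend the most care.
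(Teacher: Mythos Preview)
Your sketch has the right architecture for item~(1) and for the forward implication in~(3), but there is a genuine gap in the converse direction of~(3), and your treatment of~(2) is far more complicated than needed.

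\textbf{Item (2).} You overlooked the one-line argument: $\mathcal S_1\times\{0\}=\mathcal S\cap(X_1\times\{0\})$, because for vectors $(v_1,0)$ the triangular cocycle acts as $\Phi(n,0)(v_1,0)=(\Phi_{11}(n,0)v_1,0)$. An intersection of closed subspaces is closed, so $\mathcal S_1$ is closed; complementedness is then automatic since $X_1$ is Hilbert. No need for $L$, no circularity.

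\textbf{Item (3), converse.} Here is the real gap. You write ``from $\mathcal S=\{(\xi+L\eta,\eta)\}$ one computes $\codim\mathcal S=\codim\mathcal RP_{11}(0)+\codim\mathcal RP_{22}(0)$,'' with $\eta$ ranging over $\mathcal S_2=\mathcal RP_{22}(0)$. That is precisely what fails: the second coordinate $\eta$ only ranges over the subspace $\mathcal S_2'\subset\mathcal S_2$ of those $\eta$ for which the inhomogeneous equation
\[
x_1(n+1)=A_{11}(n)x_1(n)+A_{12}(n)\Phi_{22}(n,0)\eta,\quad n\ge 0,
\]
has a bounded solution. Even when the $A_{11}$-block has a dichotomy, this equation need not be solvable for every $\eta\in\mathcal S_2$, because the forcing $n\mapsto A_{12}(n)\Phi_{22}(n,0)\eta$ is not assumed bounded (no hypothesis $\sup_n\|A_{12}(n)\|<\infty$ here). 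Your dimension count therefore gives only $d=d_1+\codim\mathcal S_2'\ge d_1+d_2$, which is the easy half.

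The paper obtains the missing inequality $d\le d_1+d_2$ by passing to the adjoint system $x(n+1)=A(-n-1)^*x(n)$ on $\mathbb Z^-$. This system is \emph{lower} triangular, inherits an exponential dichotomy with projections $\tilde P(n)=P(-n)^*$, and (crucially, via $\mathcal N P(0)^*=(\mathcal RP(0))^\perp$) satisfies $\dim\mathcal N\tilde P(0)=d$ and $\dim\mathcal N\tilde P_{ii}(0)=d_i$. Running the same structural decomposition on the lower-triangular adjoint (where the roles of the blocks are swapped) yields $d\le d_1+d_2$. This adjoint trick is the substantive new ingredient in the infinite-dimensional setting and is entirely absent from your plan.

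\textbf{Item (3), forward.} Your diagnosis that ``Theorem~\ref{Hthm2} requires only existence \ldots\ one needs $x_2(n)\equiv 0$'' conflates two things. Theorem~\ref{Hthm2} has no uniqueness clause; the obstacle is purely that without $x_2\equiv 0$ you do not even get \emph{existence} of a bounded solution to the correct $A_{11}$-equation. The paper's mechanism for forcing $x_2\equiv 0$ is the one you eventually gesture at: under $d=d_1+d_2$ one has $X_1\times X_2=\mathcal S\oplus(Z_1\times Z_2)$ with $\dim Z_i=d_i$, so the bounded solution of the triangular admissibility problem can be chosen with $x(0)\in Z_1\times Z_2$, whence $x_2(0)\in\mathcal S_2\cap Z_2=\{0\}$.

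Finally, your ``main obstacle'' paragraph has the difficulty backwards: the forward implication is routine once the decomposition $\mathcal S\oplus(Z_1\times Z_2)$ is in hand; it is the converse (dichotomy $\Rightarrow d=d_1+d_2$) that requires the adjoint argument and is singled out in the paper as the nontrivial step.
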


\begin{proof}
$(1)$ Take an arbitrary $x_2\in X_2$. Then $(0, x_2)\in X_1\times X_2$, and consequently
\[
(0, x_2)=(y_1, y_2)+(z_1, z_2),
\]
for $(y_1, y_2)\in \mathcal R P(0)$ and $(z_1, z_2)\in \mathcal NP(0)=:Z$. Note that $\mathcal RP(0)=\mathcal S$, where $\mathcal S$ is given by~\eqref{mS} .
Hence, $y_2\in \mathcal S_2$ and $x_2\in \mathcal S_2+\pi_2 (Z)$, where $\pi_2\colon X_1\times X_2 \to X_2$ denotes the projection on the second coordinate. We conclude that $X_2=\mathcal S_2+\pi_2(Z)$. Let $W\subset \pi_2(Z)$ be a subspace such that \[X_2=\mathcal S_2\oplus W.\] Note that $\pi_2(Z)$ is finite-dimensional (as $Z$ is), and thus $W$ is finite-dimensional and consequently closed. Hence, $\mathcal S_2$ is complemented. By Lemma~\ref{diciid2}, we conclude that~\eqref{iid} admits an exponential dichotomy for $i=2$. This immediately implies that $\mathcal S_2$ is also closed. Finally, observe that 
\[d_2=\codim \mathcal S_2=\dim W\le \dim \pi_2(Z) \le \dim Z=d.\]

$(2)$ Let $(x_n)_{n\in \mathbb N}$ be a sequence in $\mathcal S_1$ that converges to $x\in X_1$. Note that $((x_n, 0))_{n\in \mathbb N}\subset X_1\times X_2$ is a sequence in $\mathcal S$ that converges to $(x, 0)\in X_1\times X_2$. Since $\mathcal S=\mathcal RP(0)$ is closed, we have $(x, 0)\in \mathcal S$, which gives $x\in \mathcal S_1$. This shows that $\mathcal S_1$ is closed. Since $X_1$ is a Hilbert space, we see that it is complemented.

$(3)$ First we suppose that $d=d_1+d_2$ with $d_1=\codim \mathcal S_1$. Choose subspaces $Z_i\subset X_i$ such that $X_i=\mathcal S_i\oplus Z_i$ and $\dim Z_i=d_i$ for $i=1, 2$. We claim that
\[
X_1\times X_2=\mathcal S \oplus (Z_1\times Z_2).
\]
It is enough to prove that $\mathcal S \cap (Z_1 \times Z_2) = \{0\}$ since \[\codim \mathcal S=\dim \mathcal NP(0)=d=d_1+d_2=\dim (Z_1\times Z_2).\]
Take $(v_1,v_2) \in \mathcal S \cap (Z_1 \times Z_2)$. Then $\sup_{n \in \Z^+} \|\Phi(n,0)(v_1,v_2)\| < + \infty$, and therefore $\sup_{n \in \Z^+} \|\Phi_{22}(n,0)v_2\| < + \infty$, also. From here we get that $v_2 \in S_2 \cap Z_2$, and therefore $v_2=0$. Since $\sup_{n \in \Z^+} \|\Phi(n,0)(v_1, v_2)\| < + \infty$, we get $\sup_{n \in \Z^+} \|\Phi_{11}(n,0)v_1 + \Phi_{12}(n,0)v_2\|=\sup_{n \in \Z^+} \|\Phi_{11}(n,0)v_1\| < + \infty$. Hence, $v_1 \in S_1 \cap Z_1$, that is, $v_1=0$.

    Let now $ (y_1(n))_{n\in J}\subset X_1$ be an arbitrary bounded sequence. We define $y(n)=(y_1(n),0)\in X_1\times X_2$, $n \in J$. Then $(y(n))_{n\in J}$ is bounded. By Theorem~\ref{Hthm2} we see that there is a bounded sequence $(x(n))_{n\in J}\subset X_1\times X_2$ that satisfies~\eqref{ADM2-2} with $A(n)$ as in~\eqref{tf}. Without loss of generality, we may assume that $x(0)\in Z_1\times Z_2$. In fact, otherwise, we write $x(0)$ as the sum of a vector from $\mathcal S$ that we denote by $(v_1, v_2)$ and a vector from $Z_1\times Z_2$, and replace $(x(n))_{n\in J}$ by the sequence $(\bar x(n))_{n\in J}$ given by $\bar x(n):=x(n)-\Phi(n, 0)(v_1, v_2)$. It follows from~\eqref{ADM2-2} that (writing $x(n)=(x_1(n), x_2(n))$ with $x_i(n)\in X_i$, $i=1, 2$),
   \begin{equation}\label{fw1}x_1(n+1)=A_{11}(n) x_1(n) + A_{12}(n) x_2(n) + y_1(n) \end{equation}
   and \begin{equation}\label{fw2} x_2(n+1)=A_{22}(n) x_2(n),\end{equation} for $n \in J$. 
   Since $(x_2(n))_{n\in J}$ is bounded, from~\eqref{fw2} we obtain $x_2(0)\in \mathcal S_2$. On the other hand, $x_2(0)\in Z_2$ as $x(0)\in Z_1\times Z_2$. Therefore, $x_2(0)\in \mathcal S_2\cap Z_2$, that is, $x_2(0)=0$.  We conclude that $x_2(n)=0$ for $n \in J$. Thus, \eqref{fw1} reduces to \begin{equation}\label{reduction}x_1(n+1)=A_{11}(n)x_1(n)+y_1(n), \end{equation}
   for $n\in J$.  Note that the sequence $(x_1(n))_{n\in J}$ is bounded.
   Since $\mathcal S_1$ is closed and complemented, we have (using Theorem~\ref{Hthm2}) that ~\eqref{iid} admits an exponential dichotomy for $i=1$. Hence, \eqref{ldedd} admits an exponential dichotomy.

   We now establish the converse statement. We suppose that~\eqref{ldedd} admits an exponential dichotomy. Let $P_{ii}(n)$, $n\in J$ denote projections associated with the exponential dichotomy of~\eqref{iid} for $i=1, 2$.
\begin{lemma}\label{ddd}
We have $d\ge d_1+d_2$.
\end{lemma}

\begin{proof}[Proof of the lemma]
Let $\mathcal S_2'$ be the subspace of $\mathcal S_2=\mathcal RP_{22}(0)$ consisting of all $\eta \in \mathcal S_2$ for which the equation
\begin{equation}\label{918eq}
x_1(n+1)=A_{11}(n)x_1(n)+A_{12}(n)\Phi_{22}(n, 0)\eta \quad n\in J,
\end{equation}
has a bounded solution $(x_1(n))_{n\in J}\subset X_1$.
For such $\eta$, the above equation has a unique bounded solution $(x_1(n))_{n\in J}\subset X_1$ with $x_1(0)\in \mathcal N P_{11}(0)$. Set $L\eta:=x_1(0)$. Then $L\colon \mathcal S_2\to \mathcal NP_{11}(0)$ is a linear operator, and
\[
\mathcal R P(0)=\{(\xi+L\eta, \eta): \ \xi\in \mathcal RP_{11}(0), \ \eta \in \mathcal S_2'\}.
\]
We take an arbitrary (not necessarily closed) subspace $W_2$ of $X_2$ such that $X_2=\mathcal S_2'\oplus W_2$. Then
\[ X_1\times X_2=\mathcal RP(0)\oplus (\mathcal NP_{11}(0)\times W_2), \] which implies
\begin{equation}\label{som}
d=d_1+\dim W_2=d_1+\codim \mathcal S_2'\ge d_1+d_2,
\end{equation}
as $\mathcal S_2'\subset \mathcal S_2$.
\end{proof}

In order to establish the reverse inequality, we consider the discrete system:
\begin{equation}\label{LDEb}
x(n+1)=B(n)x(n) \quad n\in \Z^-\setminus \{0\},
\end{equation}
where \[B(n)=A(-n-1)^*, \quad  n\in \Z^-\setminus \{0\}.\] Observe that 
\[
B(n)=\begin{pmatrix}
B_{11}(n) & 0 \\
B_{21}(n) & B_{22}(n)
\end{pmatrix},
\]
where 
\[
B_{11}(n)=A_{11}(-n-1)^*, \  B_{21}(n)=A_{12}(-n-1)^* \ \text{and} \ B_{22}(n)=A_{22}(-n-1)^*,
\]
for $n\in \Z^-\setminus \{0\}$. 
\begin{lemma}\label{lemadj}
The system~\eqref{LDEb} admits an exponential dichotomy with respect to projections
\[
\tilde P(n):=P(-n)^*, \quad n\in \Z^-.
\]
\end{lemma}
\begin{proof}[Proof of the lemma]
Set
\[
\bar \Phi (m, n):=\Phi(n, m)^* \quad \text{for $m, n\in \Z^+$ with $m\le n$.}
\]
Then, the following can be  verified by using the same arguments as in~\cite[Proposition 2.2.]{RZ} (see also~\cite[p.229]{Lin}):
\begin{itemize}
\item $\bar \Phi(m, n)P(n)^*=P(m)^*\bar \Phi(m, n)$ for $0\le m\le n$;
\item $\bar{\Phi}(m, n)\rvert_{\mathcal N P(n)^*}\colon \mathcal N P(n)^* \to \mathcal N P(m)^*$ is invertible for $0\le m\le n$;
\item for $0\le m\le n$,
\begin{equation}\label{dica1}
\| \bar \Phi(m, n)P(n)^*\| \le De^{-\lambda (n-m)};
\end{equation}
\item for $m\ge n\ge 0$,
\[
\| \bar \Phi(m, n)(\Id-P(n)^*)\| \le De^{-\lambda (m-n)},
\]
where 
\[
\bar \Phi(m, n):=\left (\bar \Phi(n, m)\rvert_{\mathcal N P(m)^*}\right )^{-1} \quad m\ge n\ge 0,
\]
and $D, \lambda>0$ are constants associated with the exponential dichotomy of~\eqref{LDE}.
\end{itemize}
Let $\Phi_B$ denote the cocycle associated to~\eqref{LDEb}. Observe that
\[
\Phi_B(m, n)=\Phi(-n, -m)^*=\bar \Phi(-m, -n), \quad 0\ge m\ge n.
\]
Consequently, for $0\ge m \ge n$ we have
\[
\begin{split}
\tilde P(m)\Phi_B(m, n)=P(-m)^*\bar \Phi(-m, -n) &=\bar \Phi(-m, -n)P(-n)^* \\
&=\Phi_B(m, n)\tilde P(n).
\end{split}
\]
Furthermore, $\Phi_B(m, n)\rvert_{\Ker \tilde P(n)}=\bar \Phi(-m, -n)\rvert_{\Ker P(-n)^*}$ is invertible for $0\ge m\ge n$.
Next, note that~\eqref{dica1} implies that
\[
\| \Phi_B(m, n)\tilde P(n)\|=\|\bar \Phi(-m, -n)P(-n)^*\| \le De^{-\lambda (-n-(-m))}=De^{-\lambda (m-n)},
\]
for $0\ge m\ge n$. Similarly, 
\[
\|\Phi_B(m, n)(\Id-\tilde P(n))\| \le De^{-\lambda (n-m)}, \quad \text{for $m\le n\le 0$.}
\]
\end{proof}
In the same manner, for $i=1, 2$, the discrete system 
\begin{equation}\label{iid-b}
x_{ii}(n+1)=B_{ii}(n)x_{ii}(n), \quad n\in \Z^-\setminus \{0\}
\end{equation}
admits an exponential dichotomy with respect to projections $\tilde P_{ii}(n)$, where 
\[
\tilde P_{ii}(n)=P_{ii}(-n)^*, \quad n\in \Z^-, \ i=1, 2
.\]
\begin{lemma}\label{832lem}
We have
\[
\dim \mathcal N \tilde P(0)=\dim \mathcal N P(0).
\]
\end{lemma}
\begin{proof}[Proof of the lemma]
Note that 
\[
(x_1, x_2)\in \mathcal N \tilde P(0) \iff (x_1, x_2)\in \mathcal N P(0)^* 
\iff (x_1, x_2)\in (\mathcal R P(0))^\perp,
\]
as $\mathcal N P(0)^*=(\mathcal RP(0))^\perp$. 
Thus,
\[
\dim \mathcal N  \tilde P(0)=\dim (\mathcal R P(0))^\perp=\dim \mathcal N P(0),
\]
since \[X_1\times X_2=\mathcal R P(0)\oplus \mathcal N P(0)=\mathcal R P(0)\oplus (\mathcal R P(0))^\perp.\]
\end{proof}
In the same manner,
\[
\dim \mathcal N \tilde P_{ii}(0)=\dim \mathcal N P_{ii}(0), \quad i=1, 2.
\]
Let $\mathcal U_1'$ denote the subspace which consists of all $\xi \in \mathcal N \tilde P_{11}(0)$ for which there is a bounded solution $(x_2(n))_{n\in \Z^-}\subset X_2$ of the equation
\[
x_2(n+1)=B_{22}(n)x_2(n)+B_{21}(n)x_1^\xi(n) \quad n\in \Z^-\setminus \{0\},
\]
where $(x_1^\xi(n))_{n\in \Z^-}$ is the unique bounded sequence in $X_1$ satisfying $x_1^\xi(n+1)=B_{11}(n)x_1^\xi(n)$ for $n\le -1$ and $x_1^\xi(0)=\xi$. Note that the above equation has a unique solution $(x_2(n))_{n\in \Z^-}$ with $x_2(0)\in \mathcal R \tilde P_{22}(0)$. We define a linear operator $L\colon \mathcal U_1'\to \mathcal R \tilde P_{22}(0)$ by $L\xi=x_2(0)$.  Then, it is easy to verify that 
\[
\mathcal N \tilde P(0)=\{(\xi, L\xi+\eta): \ \xi \in \mathcal U_1', \  \eta \in \mathcal N \tilde P_{22}(0)\}.
\]
Thus,\[
\dim \mathcal N \tilde P(0)=\dim \mathcal U_1'+\dim \mathcal N \tilde P_{22}(0)\le \dim \mathcal N \tilde P_{11}(0)+\dim \mathcal N \tilde P_{22}(0),
\] and hence
\[
d=\dim \mathcal N P(0)\le \dim \mathcal N P_{11}(0)+\dim \mathcal N  P_{22}(0)=d_1+d_2.
\]
This together with Lemma~\ref{ddd} yields~\eqref{dddeq}.
\end{proof}

\begin{remark}\label{1157rem}
\begin{enumerate}
\item We note that condition~\eqref{dddeq} can be reformulated as follows: for each $\eta \in \mathcal S_2$, the equation~\eqref{918eq} has a bounded solution $(x_1(n))_{n\in \Z^+}\subset X_1$. 
In fact, let $\mathcal S_2'$ be as in the proof of Lemma~\ref{ddd}. Then for each $\eta \in \mathcal S_2$, equation~\eqref{918eq} has a bounded solution $(x_1(n))_{n\in \Z^+}\subset X_1$ if and only if $\mathcal S_2'=\mathcal S_2$, which is equivalent to~\eqref{dddeq} by~\eqref{som}.
\item We observe that the requirement $\dim \mathcal NP(0)<+\infty$ is satisfied whenever $A(n)$ is a compact operator for some $n\in \Z^+$. 
\end{enumerate}
\end{remark}

\begin{remark}\label{Remark7}
Theorem~\ref{thm7} can be seen as a generalization of~\cite[Theorem 7]{DP} which deals with the case where $X_i$ is finite-dimensional for $i=1, 2$. In that setting~\eqref{dddeq} is equivalent to the condition:
\[
\dim \mathcal S=\dim \mathcal S_1+\dim \mathcal S_2,
\]
which coincides with~\cite[Eq. (2.16)]{DP}.

We emphasize that the proof of Theorem~\ref{thm7} is much more involved than the proof of~\cite[Theorem 7]{DP}, in particular the part showing that the existence of exponential dichotomies of both~\eqref{LDE} and~\eqref{ldedd} implies~\eqref{dddeq}. For this, we cannot rely on the arguments from~\cite[Remark 4.2]{BFP2} as in the infinite-dimensional setting it is not possible to approximate system~\eqref{LDE} with a system with invertible coefficients. 
\end{remark}

Let $J=\mathbb Z^-$. For $m\in J$ and $i\in \{1, 2\}$, let $U_i(m)$ denote the subspace of $X_i$ consisting of all $v_i\in X_i$ for which there is a sequence $(x_i(n))_{m\le n}\subset X_i$ satisfying $x_i(m)=v_i$ and $x_i(n)=A_{ii}(n-1)x_i(n-1)$ for $n\le m$. The following result is a generalization of~\cite[Theorem 8]{DP}.
\begin{theorem}\label{517thm}
Let $J=\mathbb Z^-$ and suppose that~\eqref{LDE} admits an exponential dichotomy with projections $P(n)$ such that $d:=\dim \mathcal N P(0)<+\infty$. Then, the following holds:
\begin{enumerate}
    \item  \eqref{iid}  admits an exponential dichotomy for $i=1$,  \[d_1(m):=\dim \mathcal U_1(m)\le d,\]  and $d_1(m)$ is independent of $m$;
    \item \eqref{ldedd} admits an exponential dichotomy if and only if
    \begin{equation}\label{dddeq2}d=d_1(m)+d_2(m) \quad \text{for $m\in J$},\end{equation} where $d_2(m):=\dim \mathcal U_2(m)$.
    \end{enumerate} 

\end{theorem}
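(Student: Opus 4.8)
The plan is to reduce Theorem~\ref{517thm} to its counterpart on $\Z^+$, namely Theorem~\ref{thm7}, by means of an adjoint--time-reversal duality, together with one direct application of Theorem~\ref{Hthm3}. The first step is to introduce the dual system: given \eqref{LDE} on $J=\Z^-$ admitting a dichotomy with projections $P(n)$ and $d:=\dim\mathcal NP(0)<+\infty$, I would consider on $\Z^+$ the operators $\hat A(n):=A(-n-1)^{*}$. Since the adjoint of a block upper-triangular operator is block lower-triangular, $\hat A(n)$ becomes block upper-triangular after reordering the coordinates as $X_2\times X_1$, with diagonal blocks $\hat A_{11}(n)=A_{22}(-n-1)^{*}$ and $\hat A_{22}(n)=A_{11}(-n-1)^{*}$ and off-diagonal entry $\hat A_{12}(n)=A_{12}(-n-1)^{*}$; thus $\hat A$ is of the form~\eqref{LDE} on $\Z^+$. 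By the duality between exponential dichotomies on $\Z^-$ and on $\Z^+$ under adjoint--time-reversal (the computations behind Lemma~\ref{lemadj}, which are symmetric in the two half-lines; cf.\ \cite{RZ,Lin}), $\hat A$ admits a dichotomy on $\Z^+$ with projections $\hat P(n)=P(-n)^{*}$, and, by the argument of Lemma~\ref{832lem} (in a Hilbert space $\dim\mathcal N(Q^{*})=\dim(\mathcal RQ)^{\perp}=\dim\mathcal NQ$ for a bounded projection $Q$), one gets $\dim\mathcal N\hat P(0)=d<+\infty$. The same remarks applied to single blocks show that, for $i\in\{1,2\}$, \eqref{iid} admits a dichotomy on $\Z^-$ if and only if $x(n+1)=A_{ii}(-n-1)^{*}x(n)$ admits one on $\Z^+$, with equal kernel dimensions of the corresponding projections; combining this with Proposition~\ref{newp} on both half-lines, \eqref{ldedd} admits a dichotomy on $\Z^-$ if and only if the block-diagonal system associated with $\hat A$ admits one on $\Z^+$.

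For part~(1), I would apply Theorem~\ref{thm7}(1) to $\hat A$ on $\Z^+$: its lower block $\hat A_{22}(n)=A_{11}(-n-1)^{*}$ admits a dichotomy on $\Z^+$ and $\codim\mathcal S_2^{\hat A}\le d$, where $\mathcal S_2^{\hat A}$ is the subspace~\eqref{Si} built from $\hat A_{22}$. Pulling $\hat A_{22}$ back through the duality (Lemma~\ref{lemadj}, applied in its stated direction, adjoint--time-reversal being an involution up to the canonical identification) yields that \eqref{iid} admits a dichotomy on $\Z^-$ for $i=1$. Hence $U_1(m)=\mathcal NP_{11}(m)$, so $d_1(m)=\dim\mathcal NP_{11}(m)$, which is independent of $m$ since $A_{11}(m)$ restricts to an isomorphism $\mathcal NP_{11}(m)\to\mathcal NP_{11}(m+1)$; write $d_1$ for the common value. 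As $\hat A_{22}$ has a dichotomy, $\mathcal S_2^{\hat A}=\mathcal R\hat P_{22}(0)$, whence $d_1=\dim\mathcal NP_{11}(0)=\dim\mathcal N\hat P_{22}(0)=\codim\mathcal S_2^{\hat A}\le d$, which is part~(1).

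For part~(2), note first that by the previous step and Proposition~\ref{newp}, \eqref{ldedd} admits a dichotomy on $\Z^-$ if and only if $A_{22}$ does. For necessity in~(2): if \eqref{ldedd} has a dichotomy then so does $A_{22}$ on $\Z^-$, so $d_2(m)=\dim\mathcal NP_{22}(m)$ is independent of $m$, and by Step~1 both $\hat A$ and its block-diagonal system have dichotomies on $\Z^+$; Theorem~\ref{thm7}(3) then gives $d=\codim\mathcal S_1^{\hat A}+\codim\mathcal S_2^{\hat A}$, and since $\hat A_{11},\hat A_{22}$ now both have dichotomies this equals $\dim\mathcal N\hat P_{11}(0)+\dim\mathcal N\hat P_{22}(0)=\dim\mathcal NP_{22}(0)+\dim\mathcal NP_{11}(0)=d_2(m)+d_1(m)$, which is~\eqref{dddeq2}. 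For sufficiency: assuming~\eqref{dddeq2}, the fact that $d_1(m)=d_1$ and $d<+\infty$ forces $d_2(m)=d-d_1<+\infty$ for all $m$, so each $\mathcal U_2(m)$ is finite-dimensional, and I would verify the three conditions of Theorem~\ref{Hthm3} for $A_{22}$ on $\Z^-$: admissibility is inherited from \eqref{LDE} by feeding in inputs $(0,y_2(n))$, as in the proof of Lemma~\ref{neglin}; $\mathcal U_2(0)$ is closed and complemented, being finite-dimensional; and for uniqueness one observes that restriction and one-step extension of bounded backward solutions make $A_{22}(m)\colon\mathcal U_2(m)\to\mathcal U_2(m+1)$ surjective, hence bijective as the two spaces share the finite dimension $d-d_1$, so a bounded solution of \eqref{iid} with $i=2$ vanishing at $0$ must vanish on all of $\Z^-$ by downward induction. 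Theorem~\ref{Hthm3} then gives a dichotomy for $A_{22}$ on $\Z^-$, and Proposition~\ref{newp} together with part~(1) yields one for \eqref{ldedd}.

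The step I expect to be the main obstacle is making the adjoint--time-reversal duality completely rigorous in the infinite-dimensional setting with no invertibility of the coefficients: one must check that dichotomies, and with them the finiteness and exact values of $\dim\mathcal N\hat P(0)$ and of $\codim\mathcal S_i^{\hat A}$, transfer in both directions, which is precisely where the Hilbert-space structure (orthogonal complements, Lemma~\ref{832lem}) and the estimates behind Lemma~\ref{lemadj} (from \cite{RZ,Lin}) are essential. A related delicate point is that the identity $\codim\mathcal S_i^{\hat A}=\dim\mathcal N\hat P_{ii}(0)$ is only available once the block in question is known to admit a dichotomy; this is why part~(1) — the automatic dichotomy of \eqref{iid} for $i=1$ — must be secured first, and why I would establish sufficiency in part~(2) directly through Theorem~\ref{Hthm3} rather than through Theorem~\ref{thm7}(3).
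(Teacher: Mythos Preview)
Your proposal is correct and rests on the same core idea as the paper: pass from \eqref{LDE} on $\Z^-$ to the adjoint--time-reversed system $B(n)=A(-n-1)^*$ on $\Z^+$ and exploit the $\Z^+$ theory. The paper, however, works with the \emph{lower}-triangular system $B(n)$ as is: for part~(1) it reproves in-line the complementation argument behind Theorem~\ref{thm7}(1) for $B_{11}=A_{11}(-n-1)^*$ (showing $\mathcal S_1^B$ has a finite-dimensional complement inside $\pi_1\mathcal N\tilde P(0)$ and invoking Theorem~\ref{Hthm2}); for the necessity in part~(2) it computes $\mathcal NP(m)$ directly as $\{(\xi+L\eta,\eta):\xi\in\mathcal NP_{11}(m),\ \eta\in\mathcal U_2'(m)\}$ to obtain $d\le d_1(m)+d_2(m)$, and then appeals to the dichotomy of the dual system for the reverse inequality. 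Your coordinate swap $(x_1,x_2)\mapsto(x_2,x_1)$, which makes $\hat A$ genuinely upper-triangular with diagonal blocks $A_{22}^*,A_{11}^*$, lets you invoke Theorem~\ref{thm7} as a black box for both of these steps; this is shorter and more modular, at the cost of having to verify the duality (Lemmas~\ref{lemadj} and~\ref{832lem}) in both directions and at both the full-system and single-block levels --- precisely the point you correctly flag as the place requiring care. Your sufficiency argument in part~(2) coincides with the paper's: it is exactly the content of Lemma~\ref{neglin}, with the surjectivity-plus-equal-finite-dimension argument giving the uniqueness condition in Theorem~\ref{Hthm3}.
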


\begin{proof}
$(1)$ We consider the discrete system:
\begin{equation}\label{eq851}
    x(n+1)=B(n)x(n) \quad n\in \Z^+,
\end{equation}
where 
\begin{equation}\label{555eq}
B(n)=A(-n-1)^*, \quad n\in \Z^+.
\end{equation}
Then 
\[
B(n)=\begin{pmatrix}
B_{11}(n) & 0\\
B_{21}(n) & B_{22}(n)
\end{pmatrix},
\]
where 
\[
B_{11}(n)=A_{11}(-n-1)^*, \ B_{21}(n)=A_{12}(-n-1)^* \ \text{and} \ B_{22}(n)=A_{22}(-n-1)^*,
\]
for $n\in \Z^+$. Using the arguments as in the proof of Lemma~\ref{lemadj}, we find that~\eqref{eq851} admits an exponential dichotomy with respect to projections $\tilde P(n)$, where $\tilde P(n)=P(-n)^*$, $n\in \Z^+$. As in Lemma~\ref{832lem}, $\dim \mathcal N \tilde P(0)=\dim \mathcal N P(0)=d$.

Using Theorem~\ref{Hthm2} and the exponential dichotomy of~\eqref{eq851}, it can be easily shown that for each bounded sequence $(y_1(n))_{n\in \Z^+}\subset X_1$, there exists a bounded sequence $(x_1(n))_{n\in \Z^+}\subset X_1$ such that
\[
x_1(n+1)=B_{11}(n)x_1(n)+y_1(n), \quad n\in \Z^+.
\]
Let 
\begin{equation}\label{602s1}
\mathcal S_1^B=\left \{v_1\in X_1: \sup_{n\in \Z^+}\|\Phi_{11}^B(n, 0)v_1\|<+\infty \right \},
\end{equation}
where $\Phi_{11}^B(m, n)$ denotes the cocycle associated with the discrete system 
\begin{equation}\label{B11}
x_{11}(n+1)=B_{11}(n)x_{11}(n), \quad n\in \Z^+.
\end{equation}
Take $x_1\in X_1$ and write $(x_1, 0)$ as
\[
(x_1, 0)=(y_1, y_2)+(z_1, z_2),
\]
where $(y_1, y_2)\in \mathcal R\tilde P(0)$ and $(z_1, z_2)\in \mathcal N \tilde P(0)$. Note that
$x_1=y_1+z_1$, 
$y_1\in \mathcal S_1^B$ and $z_1 \in \pi_1 \mathcal N \tilde P(0)=:Z$, where $\pi_1\colon X_1\times X_2\to X_1$ is the projection on the first coordinate. Hence, 
\[
X_1=\mathcal S_1^B+Z,
\]
and $\dim Z\le d$. Consequently, we can choose $W\subset Z$ such that
\[
X_1=\mathcal S_1^B\oplus W.
\]
Note that $\dim W \le \dim Z\le d$. We conclude that $\mathcal S_1^B$ is complemented and thus by Theorem~\ref{Hthm2} we see that~\eqref{B11} admits an exponential dichotomy with respect to some projections $\tilde P_{11}(n)$, $n\in \Z^+$. Then, \eqref{iid} admits an exponential dichotomy for $i=1$ and with respect to projections $P_{11}(n)$ given by $P_{11}(n)=\tilde P_{11}(-n)^*$, $n\in \Z^-$. Moreover, $\dim \mathcal NP_{11}(0)=\dim \mathcal N\tilde P_{11}(0)$. Hence,
\[
d_1(m)=\dim \mathcal U_1(m)=\dim \mathcal NP_{11}(m)=\dim \mathcal NP_{11}(0)=\dim \mathcal N\tilde P_{11}(0)\le d,
\]
for each $m\in \Z^-$.

$(2)$ Assume first that~\eqref{dddeq2} holds. In particular, we have $\dim \mathcal U_2(0)<+\infty$, which implies that $\mathcal U_2(0)$ is closed and complemented. 

Moreover, since $d_1(m)$ does not depend on $m$, we find that $d_2(m)$ is also independent of $m$. On the other hand, it is easy to verify (using the same arguments as in the proof of~\cite[Lemma 2]{DP}) that $A_{22}(m)\mathcal U_2(m)= U_2(m+1)$ for $m\le -1$, and thus $A_{22}(m)\rvert_{\mathcal U_2(m)}\colon \mathcal U_2(m)\to \mathcal U_2(m+1)$ is invertible.

Suppose now that there is a nonzero bounded solution $(x_2(n))_{n\in J}$ of~\eqref{iid} for $i=2$ with $x_2(0)=0$. Choose the largest $\ell \in J'$ such that $x_2(\ell)\neq 0$. Then, $0=x_2(\ell+1)=A_{22}(\ell)x_2(\ell)$. Since $x_2(\ell)\in \mathcal U_2(\ell)$ and $A_{22}(\ell)\rvert_{\mathcal U_2(\ell)}$ is injective, we obtain a contradiction.  From the above observations and the Lemma~\ref{neglin}, we conclude that~\eqref{iid} admits an exponential dichotomy for $i=2$, which together with the first assertion of the theorem yields the existence of an exponential dichotomy for~\eqref{ldedd}.

Conversely, we now assume that~\eqref{ldedd} admits an exponential dichotomy. Hence, \eqref{iid} admits an exponential dichotomy with respect to projections $P_{ii}(n)$, $n\in J$ for $i=1, 2$. Then, $\mathcal NP_{ii}(m)=\mathcal U_i(m)$ for $m\in J$ and $i=1, 2$. Let $\mathcal U_2'(m)$ denote the subspace consisting of all $\eta \in \mathcal U_2(m)$ for which there exists a bounded solution $(x_1(n))_{n\le m}\subset X_1$ of the equation
\[
x_1(n+1)=A_{11}(n)x_1(n)+A_{12}(n)x_2^\eta (n) \quad n<m, 
\]
where $(x_2^\eta(n))_{n\le m}\subset X_2$ is the unique bounded sequence such that $x_2^\eta(m)=\eta$ and $x_2^\eta(n)=A_{22}(n-1)x_2^\eta(n-1)$ for $n\le m$. The above equation has a unique solution with $x_1(m)\in \mathcal R P_{11}(m)$. Let $L\eta:=x_1(m)$. Then, $L\colon \mathcal U_2'(m)\to \mathcal RP_{11}(m)$ is a linear operator and 
\[
\mathcal N P(m)=\{(\xi+L\eta, \eta): \ \xi \in \mathcal NP_{11}(m), \ \eta \in \mathcal U_2'(m)\}.
\]
Therefore, 
\[
\begin{split}
d=\dim \mathcal NP(0)=\dim \mathcal NP(m) &= \dim  \mathcal NP_{11}(m)+\dim \mathcal U_2'(m)\\
&=d_1(m)+\dim \mathcal U_2'(m) \\
&\le d_1(m)+d_2(m),
\end{split}
\]
and consequently
\[
d\le d_1(m)+d_2(m), \quad m\in J.
\]
By relying on the exponential dichotomy of~\eqref{eq851}, one can easily show that 
\[
d_1(m)+d_2(m)\le d \quad m\in J,
\]
yielding~\eqref{dddeq2}.
\end{proof}


\subsection{Dichotomies on $\Z^+$ and $\Z^-$: systems with invertible coefficients}

In this subsection, we deal with the invertible systems on $\Z^+$ and $\Z^-$. In sharp contrast to the finite-dimensional case, in Example~\ref{EX2} we have shown that the existence of an exponential dichotomy of~\eqref{LDE} does not imply the existence of an exponential dichotomy of~\eqref{ldedd} for invertible systems on $J\in \{\Z^+, \Z^-\}$ (even in the autonomous case). However, we are able to formulate some positive results in this direction.
\begin{theorem}\label{504thm}
Let $J=\mathbb Z^+$ and assume that $A_{ii}(n)$ is an invertible operator for $i=1, 2$ and $n\in J$. Suppose that~\eqref{LDE} admits an exponential dichotomy. Then~\eqref{ldedd} admits an exponential dichotomy if and only if  the following holds:
\begin{enumerate}
\item $\mathcal S_2$ given by~\eqref{Si} for $i=2$ is complemented in $X_2$;
\item the subspace 
\begin{equation}\label{subspace}
\left \{ v_1\in X_1: \ \sup_{n\in J}\|(\Phi_{11}(n, 0)^*)^{-1}v_1\|<+\infty \right \}
\end{equation}
is complemented in $X_1$.
\end{enumerate}
\end{theorem}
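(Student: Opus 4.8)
The plan is to treat the two directions separately and to use the ``adjoint trick'' already introduced in the proof of Lemma~\ref{lemadj} and Theorem~\ref{517thm}, which converts statements about $\Phi_{11}(n,0)$ on $\Z^+$ into statements about the system with coefficients $B_{11}(n)=A_{11}(-n-1)^*$ on $\Z^-$; the extra hypothesis that the $A_{ii}(n)$ are invertible is precisely what makes $B(n)$ well defined and lets us pass freely between a system and its inverse. First I would record the consequences of the standing assumptions: by the admissibility argument already used (as in the proof of Theorem~\ref{tconv1} and Lemma~\ref{diciid2}), the exponential dichotomy of~\eqref{LDE} together with Theorem~\ref{Hthm2} gives, for every bounded input $(y_2(n))_{n\in J}\subset X_2$, a bounded solution of $x_2(n+1)=A_{22}(n)x_2(n)+y_2(n)$; hence by Lemma~\ref{diciid2}, hypothesis (1) (complementedness of $\mathcal S_2$) is exactly what is needed for~\eqref{iid} to admit an exponential dichotomy for $i=2$. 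So the real content is to characterize when~\eqref{iid} admits an exponential dichotomy for $i=1$, and then Proposition~\ref{newp} assembles the two halves.

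For the $i=1$ component I would argue as in Theorem~\ref{517thm}(1): the exponential dichotomy of~\eqref{LDE} yields, via the adjoint system $B(n)=A(-n-1)^*$ on $\Z^-$ and Lemma~\ref{lemadj}, an exponential dichotomy for $x(n+1)=B(n)x(n)$ on $\Z^-$, and because $B(n)$ is block \emph{lower} triangular one extracts (using the admissibility argument once more) that for every bounded input there is a bounded solution of $x_1(n+1)=B_{11}(n)x_1(n)+y_1(n)$ on $\Z^-$. By Theorem~\ref{Hthm3} and Remark~\ref{1037rem}, the equation $x_{11}(n+1)=B_{11}(n)x_{11}(n)$ on $\Z^-$ then admits an exponential dichotomy \emph{iff} the subspace $\mathcal U$ associated with it (bounded backward orbits through $0$) is complemented, the no-nonzero-bounded-solution-through-$0$ condition being automatic here since $B_{11}(n)=A_{11}(-n-1)^*$ is invertible. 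The key identification is that, under invertibility, this subspace $\mathcal U$ for the lower-triangular $\Z^-$ system $B_{11}$ equals the subspace in~\eqref{subspace}: a vector $v_1$ lies in~\eqref{subspace} exactly when $\sup_n \|(\Phi_{11}(n,0)^*)^{-1}v_1\|<+\infty$, and $(\Phi_{11}(n,0)^*)^{-1}=(\Phi_{11}(n,0)^{-1})^*$ is precisely the cocycle of the backward system $B_{11}$ evaluated appropriately, so bounded backward $B_{11}$-orbits through $v_1$ correspond to boundedness of $(\Phi_{11}(n,0)^*)^{-1}v_1$. Finally, an exponential dichotomy of the $B_{11}$-system on $\Z^-$ transfers back (by the reverse of the adjoint/reverse-time construction, exactly as at the end of the proof of Theorem~\ref{517thm}(1)) to an exponential dichotomy of~\eqref{iid} for $i=1$ on $\Z^+$. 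Combining this with the $i=2$ half and Proposition~\ref{newp} gives~\eqref{ldedd}.

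For the converse, if~\eqref{ldedd} has an exponential dichotomy then by Proposition~\ref{newp} both~\eqref{iid} do; the $i=2$ dichotomy forces $\mathcal S_2=\mathcal RP_{22}(0)$ to be closed and complemented, giving (1), and the $i=1$ dichotomy, transported through the same adjoint identification, forces~\eqref{subspace} to equal the range of a bounded projection (the unstable projection of the $B_{11}$-system), giving (2). I expect the main obstacle to be the bookkeeping in the adjoint/reverse-time correspondence: one must verify carefully that the cocycle of the lower-triangular $\Z^-$ system $B_{11}$ is the inverse-adjoint of $\Phi_{11}$ on $\Z^+$, that the ``$\mathcal U$'' subspace of Theorem~\ref{Hthm3} for $B_{11}$ is literally the set~\eqref{subspace}, and that ``complemented'' is preserved under the adjoint (for which one uses that the $X_i$ are Hilbert spaces, so that the relevant subspaces are closed and the orthogonal complement furnishes a complement, cf. the proof of Lemma~\ref{832lem}). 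The invertibility hypothesis must be used at two precise points — to define $B(n)$ and, equivalently, to guarantee the ``no nonzero bounded solution through $0$'' clause of Theorem~\ref{Hthm3} is vacuous — and I would flag both explicitly.
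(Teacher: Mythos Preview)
Your approach is correct but differs from the paper's in how the $i=1$ component is handled. The paper stays on $\Z^+$ throughout: it observes (citing~\cite{P}) that since~\eqref{LDE} has an exponential dichotomy on $\Z^+$ and the $A(n)$ are invertible, the \emph{inverse-adjoint} system $x(n+1)=(A(n)^*)^{-1}x(n)$ also has an exponential dichotomy on $\Z^+$; this system is block lower triangular with $(A_{11}(n)^*)^{-1}$ in the upper-left corner, so the admissibility argument applied to its first coordinate, together with Theorem~\ref{Hthm2}, yields the dichotomy of~\eqref{kuj}. The point is that the cocycle of~\eqref{kuj} is exactly $n\mapsto (\Phi_{11}(n,0)^*)^{-1}$, so the stable subspace $\mathcal S$ of~\eqref{kuj} is literally the set in~\eqref{subspace}, and hypothesis~(2) feeds directly into Theorem~\ref{Hthm2}.

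Your route instead passes to $\Z^-$ via $B(n)=A(-n-1)^*$ and invokes Lemma~\ref{lemadj} and Theorem~\ref{Hthm3}. This is equivalent after reindexing (forward evolution of $B_{11}$ on $\Z^-$ is the inverse-adjoint evolution of $A_{11}$ on $\Z^+$), and your identification of the $\mathcal U$-subspace for $B_{11}$ with~\eqref{subspace} is correct. The trade-off: the paper's approach is shorter and avoids the time-reversal bookkeeping, but it imports an external fact from~\cite{P}, whereas your version reuses only machinery already proved in the paper (Lemma~\ref{lemadj}) and makes the role of invertibility in the Theorem~\ref{Hthm3} clause ``no nonzero bounded solution through $0$'' transparent. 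Either argument establishes the theorem; the $i=2$ half and the converse direction are handled identically in both.
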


\begin{proof}
Suppose that $\mathcal S_2$ is complemented in $X_2$ and that the subspace in~\eqref{subspace} is complemented in $X_1$.
It follows from our assumptions and Lemma~\ref{diciid2} that~\eqref{iid} admits an exponential dichotomy for $i=2$. 
Since~\eqref{LDE} admits an exponential dichotomy, we have (by the same reasoning as in the proof of~\cite[Lemma 1]{P}) that  
\begin{equation}\label{star}
x(n+1)=(A(n)^*)^{-1} x(n) \quad n\in J
\end{equation}
admits an exponential dichotomy
In addition, 
\begin{equation}\label{501coeff}
(A(n)^*)^{-1} =\begin{pmatrix}
(A_{11}(n)^*)^{-1} & 0\\
B_{21}(n) & (A_{22}(n)^*)^{-1}
\end{pmatrix} \quad n\in J,
\end{equation}
where $B_{21}(n)\in \mathcal L(X_1, X_2)$.
Now it is easy to see that the system
\begin{equation}\label{kuj}
x_1(n+1)=(A_{11}(n)^*)^{-1} x_1(n) \quad n\in J
\end{equation}
admits an exponential dichotomy. To this end, we take a bounded sequence $(y_1(n))_{n\in J}\subset X_1$. Set $y(n)=(y_1(n), 0)\in X_1\times X_2$ for $n\in J$. Then $(y(n))_{n\in J}$ is bounded. Since~\eqref{star} admits an exponential dichotomy, there exists a bounded sequence $(x(n))_{n\in J}\subset X_1\times X_2$ such that 
\[
x(n+1)=(A(n)^*)^{-1}x(n)+y (n), \quad n\in J.
\]
Writing $x(n)=(x_1(n), x_2(n))$ with $x_i(n)\in X_i$ for $i=1, 2$, we have 
\[
x_1(n+1)=(A_{11}(n)^*)^{-1}x_1(n)+y_1(n), \quad n\in J.
\]
From the above and the assumption that the subspace in~\eqref{subspace} is complemented in $X_1$, we conclude (via Theorem~\ref{Hthm}) that~\eqref{kuj} admits an exponential dichotomy. Hence, \eqref{iid} admits an exponential dichotomy for $i=1$.

The converse direction is trivial.
\end{proof}

\begin{theorem}\label{515thm}
Let $J=\mathbb Z^+$ and assume that $A_{ii}(n)$ is an invertible operator for $i=1, 2$ and $n\in J$. Suppose that~\eqref{LDE} admits an exponential dichotomy with respect to projections $P(n)$, $n\in J$, and that $d:=\dim \mathcal NP(0)<+\infty$. Then \eqref{ldedd} admits an exponential dichotomy.

\end{theorem}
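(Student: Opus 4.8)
The plan is to deduce Theorem~\ref{515thm} from Theorem~\ref{504thm} by verifying that both complementedness conditions in that theorem are automatically satisfied when $d=\dim\mathcal NP(0)<+\infty$. Since $A_{ii}(n)$ is invertible for $i=1,2$, the coefficient $A(n)$ is itself invertible (it is block upper triangular with invertible diagonal blocks), so the setup of Theorem~\ref{504thm} applies. The two subspaces whose complementedness must be checked are $\mathcal S_2$ from~\eqref{Si} with $i=2$, and the subspace in~\eqref{subspace}.

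First I would handle $\mathcal S_2$. Arguing exactly as in part (1) of the proof of Theorem~\ref{thm7}: since~\eqref{LDE} has an exponential dichotomy, $\mathcal S=\mathcal RP(0)$ and $Z:=\mathcal NP(0)$ satisfy $X_1\times X_2=\mathcal S\oplus Z$ with $\dim Z=d<+\infty$. Writing an arbitrary $x_2\in X_2$ via $(0,x_2)=(y_1,y_2)+(z_1,z_2)$ with the first summand in $\mathcal S$ and the second in $Z$, and using $\mathcal S=\mathcal S_1\times\mathcal S_2$-type reasoning (more precisely $y_2\in\mathcal S_2$), one gets $X_2=\mathcal S_2+\pi_2(Z)$ where $\pi_2$ is the projection onto $X_2$. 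Since $\dim\pi_2(Z)\le d<+\infty$, one can pick $W\subset\pi_2(Z)$ finite-dimensional (hence closed) with $X_2=\mathcal S_2\oplus W$, so $\mathcal S_2$ is complemented.

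Next I would treat the subspace in~\eqref{subspace}. Following the proof of Theorem~\ref{504thm}, the dichotomy of~\eqref{LDE} implies the dichotomy of~\eqref{star}, i.e. of $x(n+1)=(A(n)^*)^{-1}x(n)$, with projections $\tilde P(n)$ satisfying $\dim\mathcal N\tilde P(0)=\dim\mathcal NP(0)=d$ (this last equality is the content of Lemma~\ref{832lem} applied in the present one-sided setting, which works since $X_1\times X_2$ is Hilbert). Now~\eqref{501coeff} shows $(A(n)^*)^{-1}$ is block \emph{lower} triangular with diagonal blocks $(A_{11}(n)^*)^{-1}$ and $(A_{22}(n)^*)^{-1}$. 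The subspace in~\eqref{subspace} is precisely the "bounded subspace" $\mathcal S_1$ associated with the diagonal system~\eqref{kuj}, $x_1(n+1)=(A_{11}(n)^*)^{-1}x_1(n)$. To show it is complemented I would again run the part-(1) argument of Theorem~\ref{thm7}, but now I need the analogue for a block \emph{lower} triangular system: for such a system the bounded subspace of the \emph{second} diagonal block is complemented of codimension $\le d$ by the projection/finite-dimensionality trick, and then one extracts that the bounded subspace of the \emph{first} diagonal block, $(A_{11}(n)^*)^{-1}$, is complemented by a symmetric argument (or by transposing: the $\Z^-$ adjoint of the lower triangular system is upper triangular, reducing to the already-treated case). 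Once~\eqref{subspace} is complemented, Theorem~\ref{504thm} immediately yields that~\eqref{ldedd} admits an exponential dichotomy.

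The main obstacle I anticipate is the bookkeeping for the block \emph{lower} triangular system in the second step: the argument in part (1) of Theorem~\ref{thm7} is written for upper triangular systems and for the \emph{first} diagonal block's bounded subspace being realized inside $\mathcal RP(0)$, whereas here I need the complementedness of the bounded subspace of the \emph{top-left} block $(A_{11}(n)^*)^{-1}$ of a lower triangular system. I would resolve this cleanly by taking one more adjoint/time-reversal — passing from~\eqref{kuj} on $\Z^+$ to its dual system on $\Z^-$, which is block \emph{upper} triangular with diagonal block $A_{11}(-n-1)$ in the top-left corner — so that the already-established Theorem~\ref{thm7}-style argument (part (1)) applies verbatim, and then transporting closedness/complementedness back through the adjoint correspondence using the Hilbert space orthogonal-complement identification as in Lemmas~\ref{lemadj} and~\ref{832lem}. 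Everything else is a routine combination of Theorem~\ref{504thm}, Lemma~\ref{diciid2}, and the finite-dimensionality of $\mathcal NP(0)$.
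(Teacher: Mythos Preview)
Your first step, showing that $\mathcal S_2$ is complemented via the argument of Theorem~\ref{thm7}(1), is correct and matches the paper. The gap is in your second step. You assert that~\eqref{star} admits an exponential dichotomy on $\Z^+$ with projections $\tilde P(n)$ satisfying $\dim\mathcal N\tilde P(0)=d$, citing Lemma~\ref{832lem}. But Lemma~\ref{832lem} concerns the adjoint system on the \emph{opposite} half-line ($B(n)=A(-n-1)^*$ on $\Z^-$), whereas~\eqref{star} is the adjoint-inverse system on the \emph{same} half-line. For~\eqref{star} the dichotomy projections are $Q(n)=\Id-P(n)^*$, so that $\mathcal R Q(0)=\mathcal N P(0)^*=(\mathcal R P(0))^\perp$ has dimension $d$, while $\mathcal N Q(0)=\mathcal R P(0)^*=(\mathcal N P(0))^\perp$ has dimension $\codim\mathcal N P(0)$, which is infinite. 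Since on $\Z^+$ the stable subspace is uniquely determined, every dichotomy projection for~\eqref{star} has infinite-dimensional kernel. The finite-codimension trick from Theorem~\ref{thm7}(1) --- decomposing $(x_1,0)$ into a stable part plus an element of the finite-dimensional unstable space --- therefore does not apply to~\eqref{star}, and you cannot conclude that the subspace in~\eqref{subspace} is complemented. Your proposed salvage via a further adjoint/time-reversal fails for the same reason: passing from~\eqref{star} on $\Z^+$ to its dual on $\Z^-$ gives coefficients $A(-n-1)^{-1}$ with projections $Q(-n)^*$, and $\dim\mathcal N Q(0)^*=\dim(\mathcal R Q(0))^\perp$ is again infinite, so neither Theorem~\ref{thm7} nor Theorem~\ref{517thm} is available.

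The paper's proof does not go through Theorem~\ref{504thm}. Instead it shows that~\eqref{LDE} admits an exponential dichotomy with respect to a new family of \emph{block upper-triangular} projections $\tilde P(n)$; the diagonal blocks of these projections then furnish dichotomies for both systems~\eqref{iid}. The work lies in proving that the subspace $\mathcal S_2'$ of those $\eta\in\mathcal S_2$ for which~\eqref{918eq} has a bounded solution is closed --- this uses reflexivity of $X_1$ and a weak-compactness diagonal argument --- and that the associated operator $L\colon\mathcal S_2'\to\mathcal S_1^\perp$ is bounded. These steps exploit $d<+\infty$ in a way that has no counterpart in your duality approach.
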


\begin{proof}
It follows from Theorem~\ref{thm7} that~\eqref{iid} admits an exponential dichotomy for $i=2$. Moreover, $\mathcal S_1$ (given by~\eqref{Si} for $i=1)$ is closed. 
Next, we have
\[
\mathcal R P(0)=\{(\xi+L\eta, \eta): \xi \in \mathcal S_1, \ \eta \in \mathcal S_2'\},
\]
where $\mathcal S_2'$ is as in the proof of Lemma~\ref{ddd} and $L\colon \mathcal S_2'\to \mathcal S_1^\perp$ is a linear operator given by $L\eta=x_1(0)$, where $(x_1(n))_{n\in J}\subset X_1$ is the unique bounded solution of the equation
\[
x_1(n+1)=A_{11}(n)x_1(n)+A_{12}(n)\Phi_{22}(n, 0)\eta \quad n\in J,
\]
with $x_1(0)\in \mathcal S_1^\perp$. Note that 
\[
X_1\times X_2=\mathcal RP(0)\oplus (\mathcal S_1^\perp \times W),
\]
where $W$ is any subspace of $X_2$ such that $X_2=\mathcal S_2'\oplus W$. Hence, $\dim \mathcal S_1^\perp<+\infty$.
\begin{lemma}\label{bounded}
$L$ is a bounded linear operator.
\end{lemma}

\begin{proof}[Proof of the lemma]
Since $\dim \mathcal S_1^\perp<+\infty$, it suffices to show that $\mathcal N L$ is closed. To this end, we take a sequence $(\eta_k)_k \in \mathcal N L$ that converges to $\eta$ in $X_2$. Then $(0, \eta_k)_k$ is a sequence in $\mathcal RP(0)$ that converges to $(0, \eta)$. Since $\mathcal RP(0)$ is closed, we have $(0, \eta)\in \mathcal R P(0)$. In particular, $\eta \in \mathcal S_2'$ and $L\eta=0$
Hence, $\eta \in \mathcal N L$.
\end{proof}
\begin{lemma}\label{closed}
$\mathcal S_2'$ is closed.
\end{lemma}

\begin{proof}[Proof of the lemma]
Take a sequence $(\eta_k)_k$ in $\mathcal S_2'$ that converges to $\eta\in X_2$. Since $\mathcal S_2$ is closed, we have $\eta \in \mathcal S_2$.

By $(x_1^k(n))_{n\in J}$ we will denote the bounded sequence in $X_1$ such that 
\begin{equation}\label{x11k}
    x_1^k(n+1)=A_{11}(n)x_1^k(n)+A_{12}(n)\Phi_{22}(n, 0)\eta_k \quad n\in J,
\end{equation}
and $x_1^k(0)=L\eta_k \in \mathcal S_1^\perp$. In particular, for each $k\in \mathbb N$,
\[
(x_1^k(n), \Phi_{22}(n, 0)\eta_k)=\Phi(n, 0)(L\eta_k, \eta_k), \quad n\in J.
\]
Since $(L\eta_k, \eta_k)\in \mathcal R P(0)$, we have
\[
\|x_1^k(n)\| \le \|\Phi(n, 0)(L\eta_k, \eta_k)\| \le Ce^{-\lambda n} \|(L\eta_k, \eta_k)\| \le C(1+\|L\|)\sup_{k\in \mathbb N}\|\eta_k\|,
\]
for $n\in J$ and $k\in \mathbb N$, where $C, \lambda>0$ are constants associated with the exponential dichotomy of~\eqref{LDE}. Consequently, 
\[
\|x_1^k(n)\| \le D \quad \text{for $n\in J$ and $k\in \mathbb N$,}
\]
where $D:=C(1+\|L\|)\sup_{k\in \mathbb N}\|\eta_k\|$.

Since $X_1$ is reflexive (as it is a Hilbert space), for each $n\in J$, the sequence $(x_1^k(n))_k$ has a weakly convergent subsequence whose limit belongs to the closed ball in $X_1$ with radius $D$ centered in $0$. Using the diagonal procedure, we can find a subsequence $(k_j)_j$ of $\mathbb N$ such that $(x_1^{k_j}(n))_j$ converges weakly to $x_1(n)\in X_1$ with $\|x_1(n)\| \le D$, for every $n\in J$.
Applying~\eqref{x11k} for $k=k_j$, passing to the limit when $j\to \infty$, and recalling that weak convergence is preserved under the action of a bounded linear operator, we get
\[
x_1(n+1)=A_{11}(n)x_1(n)+A_{12}(n)\Phi_{22}(n, 0)\eta \quad n\in J.
\]
Therefore, $\eta\in \mathcal S_2'$. 
\end{proof}
Let $P_{11} \colon X_1\to \mathcal S_1$ be the projection on $\mathcal S_1$ along $\mathcal S_1^\perp$, and $P_{22}\colon X_2 \to \mathcal S_2'$ be the projection on $\mathcal S_2'$ along $(\mathcal S_2')^\perp$. We define the projection $\tilde P$ on $X_1\times X_2$ by 
\begin{equation}\label{tildeP}
\tilde P=\begin{pmatrix}
P_{11} & LP_{22}\\
0 &P_{22}
\end{pmatrix}.
\end{equation}
From the previous two lemmas it follows that $\tilde P$ is bounded. Moreover, $\mathcal R\tilde P=\mathcal RP(0)$. Set
\[
\tilde P(n):= \Phi(n, 0)\tilde P\Phi(0, n), \quad n\in J.
\]
Note that 
\[
\tilde P(n)=\begin{pmatrix}
\Phi_{11}(n, 0)P_{11}\Phi_{11}(0, n) & \star\\
0 &\Phi_{22}(n, 0)P_{22}\Phi_{22}(0, n)
\end{pmatrix}, \quad n\in J.
\]
Then \eqref{LDE} admits an exponential dichotomy with respect to the sequence of projections $\tilde P(n)$, $n\in J$. This easily implies that~\eqref{iid} for $i=2$ admits an exponential dichotomy with respect to the sequence of projections $\tilde P_{22}(n)$, $n\in J$ given by 
\[
\tilde P_{22}(n)=\Phi_{22}(n, 0)P_{22}\Phi_{22}(0, n), \quad n\in J.\]
\end{proof}

\begin{remark}\label{Remark8}
In the proof of Theorem~\ref{515thm} it was necessary to show that~\eqref{LDE} admits an exponential dichotomy with respect to a sequence of bounded projections that have an upper triangular form. For this purpose, we had to prove that $\mathcal S_2'$ is closed and that $L$ is a bounded linear operator, so that $\tilde P$ from~\eqref{tildeP} is a bounded projection. This again illustrates substantial difficulties arising by working in an infinite-dimensional setting.
\end{remark}

Before stating the next result, we observe that in the case when operators $A_{ii}(n)$, $n\in J$ are invertible, we can set
\[
\Phi_{ii}(m, n):= (\Phi_{ii}(n, m))^{-1}, \quad m, n\in J, \ m \le n.
\]
\begin{theorem}\label{456thm}
Let $J=\mathbb Z^-$ and assume that $A_{ii}(n)$ is an invertible operator for $i=1, 2$ and $n\in J$. Suppose that~\eqref{LDE} admits an exponential dichotomy. Then~\eqref{ldedd} admits an exponential dichotomy if and only if  the following holds:
\begin{enumerate}
 \item the subspace
 \[
 \mathcal U_2=\left \{v_2\in X_2: \ \sup_{n\in J}\|\Phi_{22}(n, 0)v_2\|<+\infty \right \}
 \]
 is complemented in $X_2$;
 
\item the subspace
 \begin{equation}\label{458subs}
 \left \{v_1\in X_1: \ \sup_{n\in J}\|\Phi_{11}(0, n)^*v_1\|<+\infty \right \}
 \end{equation}
  is complemented in $X_1$.
\end{enumerate}
\end{theorem}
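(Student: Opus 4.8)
The plan is to follow closely the proof of Theorem~\ref{504thm}, with Theorem~\ref{Hthm3} and Remark~\ref{1037rem} taking the place of Theorem~\ref{Hthm2} and Remark~\ref{120rem}. I would rely on two elementary consequences of the hypotheses. First, since $A_{11}(n)$ and $A_{22}(n)$ are invertible, so is $A(n)$, and hence (by the reasoning in the proof of~\cite[Lemma 1]{P}) the exponential dichotomy of~\eqref{LDE} passes to the adjoint--inverse system $x(n+1)=(A(n)^*)^{-1}x(n)$ on $\Z^-$, whose coefficients have the \emph{lower} triangular form
\[
(A(n)^*)^{-1}=\begin{pmatrix}(A_{11}(n)^*)^{-1} & 0\\ B_{21}(n) & (A_{22}(n)^*)^{-1}\end{pmatrix},\qquad B_{21}(n)\in \mathcal L(X_1,X_2).
\]
Second, invertibility of $A_{22}(n)$, and of $A_{11}(n)^*$, makes the uniqueness clauses in Theorem~\ref{Hthm3} and Lemma~\ref{neglin} automatic: any bounded solution of~\eqref{iid} for $i=2$, or of $x_1(n+1)=(A_{11}(n)^*)^{-1}x_1(n)$, which vanishes at $0$ must be identically zero, by iterating the recursion backwards.

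For the sufficiency direction I would assume (1) and (2) and, via Proposition~\ref{newp}, reduce to producing an exponential dichotomy of~\eqref{iid} for $i=1,2$. For $i=2$ I would note that $\mathcal U_2$ from the statement coincides with the subspace appearing in Lemma~\ref{neglin} (by invertibility the backward orbit of a point is unique), that the second condition of Lemma~\ref{neglin} holds by the observation above, and then invoke Lemma~\ref{neglin} together with assumption (1) and the exponential dichotomy of~\eqref{LDE}. For $i=1$ I would start from the exponential dichotomy of the adjoint--inverse system on $\Z^-$, pass to its admissibility via Theorem~\ref{Hthm3}, feed in inputs $y(n)=(y_1(n),0)$ with $(y_1(n))$ an arbitrary bounded sequence in $X_1$, and read off the first coordinate of a bounded solution; because the $(1,2)$--block of $(A(n)^*)^{-1}$ vanishes, this shows $x_1(n+1)=(A_{11}(n)^*)^{-1}x_1(n)$ on $\Z^-$ is admissible. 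Its unstable subspace at $0$ is precisely the subspace~\eqref{458subs}, since $\Phi_{11}(0,n)^*$ is the operator carrying the value at $0$ of a solution of $x_1(n+1)=(A_{11}(n)^*)^{-1}x_1(n)$ to its value at $n$; this subspace is complemented by assumption (2), and the uniqueness clause is automatic. Theorem~\ref{Hthm3} and Remark~\ref{1037rem} then yield an exponential dichotomy of $x_1(n+1)=(A_{11}(n)^*)^{-1}x_1(n)$, and one further application of~\cite[Lemma 1]{P} (the adjoint--inverse of that system being the $A_{11}$--system) gives an exponential dichotomy of~\eqref{iid} for $i=1$. Proposition~\ref{newp} then finishes this direction.

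For the necessity direction I would assume~\eqref{ldedd} has an exponential dichotomy, so by Proposition~\ref{newp} each~\eqref{iid}, $i=1,2$, does too, with projections $P_{ii}(n)$. Invertibility of $A_{22}(n)$ then gives $\mathcal U_2=\mathcal N P_{22}(0)$ (the backward orbit through a point stays bounded exactly on $\mathcal N P_{22}(0)$ and blows up elsewhere), so $\mathcal U_2$ is closed with closed complement $\mathcal R P_{22}(0)$, which is (1). Similarly,~\cite[Lemma 1]{P} gives an exponential dichotomy of $x_1(n+1)=(A_{11}(n)^*)^{-1}x_1(n)$; its unstable subspace at $0$ equals~\eqref{458subs} as above, hence is the kernel of the associated bounded projection, hence closed and complemented, which is (2).

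The step I expect to be the only genuinely delicate one is matching the abstract unstable subspace of Theorem~\ref{Hthm3} for the lower--triangular, adjoint--inverted $A_{11}$--system with the concrete subspace~\eqref{458subs} under the $\Z^-$ cocycle conventions fixed just before the statement, and verifying that the admissibility obtained from~\eqref{LDE} survives restriction to the first coordinate. The invertibility hypothesis is exactly what renders the two side conditions in Theorem~\ref{Hthm3} and Lemma~\ref{neglin} harmless, since closedness and complementedness are then inherited from honest bounded projections and the uniqueness--of--bounded--solution clauses hold automatically.
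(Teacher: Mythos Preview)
Your proposal is correct and follows essentially the same route as the paper: both argue sufficiency by invoking Lemma~\ref{neglin} (via Remark~\ref{459rem}) for $i=2$, then pass to the adjoint--inverse system~\eqref{star} with coefficients~\eqref{501coeff}, extract admissibility of~\eqref{kuj} from the first coordinate, identify its unstable subspace with~\eqref{458subs}, and apply Theorem~\ref{Hthm3}; the necessity is dismissed as straightforward in both. Your write-up is in fact slightly more explicit than the paper's on two points---why invertibility trivializes the uniqueness clause in Lemma~\ref{neglin} and Theorem~\ref{Hthm3}, and why~\eqref{kuj} having a dichotomy transfers back to~\eqref{iid} for $i=1$---but the argument is the same.
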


\begin{remark}\label{459rem}
We note that $\mathcal U_2$ in the statement of Theorem~\ref{456thm} coincides with $\mathcal U_2$ introduced in the statement of Lemma~\ref{neglin}.
\end{remark}

\begin{proof}
Suppose that $\mathcal U_2$ is complemented in $X_2$ and that the subspace in~\eqref{458subs} is complemented in $X_1$. From Remark~\ref{459rem} and Lemma~\ref{neglin}, we have that~\eqref{iid} admits an exponential dichotomy for $i=2$.

Since~\eqref{LDE} admits an exponential dichotomy, we find that~\eqref{star} admits an exponential dichotomy. Moreover, the coefficients of~\eqref{star} are of the form~\eqref{501coeff}. By proceeding as in the proof of Theorem~\ref{504thm}, we find that for each bounded sequence $(y_1(n))_{n\in J}\subset X_1$, there exists a bounded sequence $(x_1(n))_{n\in J}$ such that 
\[
x_1(n+1)=(A_{11}(n)^*)^{-1}x_1(n)+y_1(n), \quad \text{for $n\in J'$.}
\]
In addition, the subspace of $X_1$ consisting of all $v_1\in X_1$ for which there is a bounded sequence $(x_1(n))_{n\in J}$ with $x_1(0)=v_1$ and $x_1(n)=(A_{11}(n-1)^*)^{-1}x_1(n-1)$ for $n\in J$ coincides with the one introduced in~\eqref{458subs}, which together with Theorem~\ref{Hthm3} implies that~\eqref{kuj} admits an exponential dichotomy. Therefore, \eqref{iid} for $i=1$ admits an exponential dichotomy. 

The opposite direction is straightforward. 
\end{proof}

\begin{theorem}
Let $J=\mathbb Z^-$ and assume that $A_{ii}(n)$ is an invertible operator for $i=1, 2$ and $n\in J$. Suppose that~\eqref{LDE} admits an exponential dichotomy with respect to projections $P(n)$, $n\in J$, and that $d:=\dim \mathcal NP(0)<+\infty$. Then \eqref{ldedd} admits an exponential dichotomy.

\end{theorem}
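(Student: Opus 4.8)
The plan is to reduce the statement to its $\Z^+$ counterpart, Theorem~\ref{515thm}, by passing to the adjoint reversed system, exactly as in the proof of Theorem~\ref{517thm}(1). Since $A_{ii}(n)$ is invertible for $i=1,2$, each $A(n)$ is an invertible block upper triangular operator, and we set $B(n):=A(-n-1)^*$ for $n\in\Z^+$. Then $B(n)$ is block \emph{lower} triangular, with diagonal blocks $B_{11}(n)=A_{11}(-n-1)^*$ and $B_{22}(n)=A_{22}(-n-1)^*$, both of which are invertible. Arguing as in the proof of Theorem~\ref{517thm}(1) (equivalently, via the adjoint-of-a-dichotomy argument behind Lemma~\ref{lemadj}), the system $x(n+1)=B(n)x(n)$ on $\Z^+$ admits an exponential dichotomy with projections $\tilde P(n)=P(-n)^*$, and $\dim\mathcal N\tilde P(0)=\dim\mathcal N P(0)=d<+\infty$.

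Next I would transpose this into an upper triangular system. Let $\sigma\colon X_1\times X_2\to X_2\times X_1$ be the coordinate swap $\sigma(x_1,x_2)=(x_2,x_1)$, which is an isometric isomorphism, and put $\hat B(n):=\sigma B(n)\sigma^{-1}$. Then $\hat B(n)$ is block upper triangular on $X_2\times X_1$, with the invertible diagonal blocks $B_{22}(n)$ and $B_{11}(n)$, and the system $\hat x(n+1)=\hat B(n)\hat x(n)$ on $\Z^+$ admits an exponential dichotomy with projections $\sigma\tilde P(n)\sigma^{-1}$, whose kernel at $0$ again has dimension $d<+\infty$. Theorem~\ref{515thm}, applied to $\hat B$ with the roles of $X_1$ and $X_2$ interchanged, then yields that the associated block diagonal system on $\Z^+$ (with diagonal blocks $B_{22}(n)$ and $B_{11}(n)$) admits an exponential dichotomy. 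By Proposition~\ref{newp}, both $x_2(n+1)=B_{22}(n)x_2(n)$ and $x_1(n+1)=B_{11}(n)x_1(n)$ admit exponential dichotomies on $\Z^+$.

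Finally I would undo the reduction: since $B_{ii}(-n-1)^*=A_{ii}(n)$, applying the adjoint-reversed-system correspondence once more, now in the direction $\Z^+\to\Z^-$ and to the single-block systems $x_i(n+1)=B_{ii}(n)x_i(n)$, shows that \eqref{iid} admits an exponential dichotomy on $\Z^-$ for $i=1,2$; Proposition~\ref{newp} then gives that \eqref{ldedd} admits an exponential dichotomy. I do not expect a genuinely new obstacle here: the hard analytic content — the boundedness of the operator $L$ and the closedness of $\mathcal S_2'$, which rest on reflexivity and weak sequential compactness of Hilbert spaces — is already carried out inside Theorem~\ref{515thm}. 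The only points requiring care are the bookkeeping of the adjoint-plus-time-reversal transformation, the observation that the coordinate swap converts the resulting lower triangular system into an upper triangular one so that Theorem~\ref{515thm} is applicable, and the remark that invertibility of the $A_{ii}(n)$ is precisely what guarantees invertibility of the $B_{ii}(n)$, i.e. the hypotheses needed to invoke Theorem~\ref{515thm}. (Alternatively, one could give a direct proof mirroring that of Theorem~\ref{515thm}, replacing Theorem~\ref{Hthm2} by Theorem~\ref{Hthm3} and the stable subspaces $\mathcal S_i$ by the unstable subspaces $\mathcal U_i(m)$ throughout.)
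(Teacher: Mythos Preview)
Your argument is correct and takes a genuinely different route from the paper's. Both the paper and you begin by passing to the adjoint time-reversed system $B(n)=A(-n-1)^*$ on $\Z^+$, which is block \emph{lower} triangular with invertible diagonal blocks $B_{ii}(n)=A_{ii}(-n-1)^*$ and admits an exponential dichotomy with $\dim\mathcal N\tilde P(0)=d<\infty$. At this point the paper proceeds directly: it repeats, for this lower triangular $\Z^+$ system, the analytic core of Theorem~\ref{515thm} --- proving the analogue of Lemma~\ref{bounded} (boundedness of the corresponding operator $L$) and of Lemma~\ref{closed} (closedness of the subspace $(\mathcal S_1^B)'$, via weak sequential compactness) --- and then constructs a block lower triangular family of bounded projections $\bar P(n)$ for the $B$-system, from which the dichotomy of the diagonal system on $\Z^+$, and hence of~\eqref{ldedd} on $\Z^-$, is read off. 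Your route is shorter: conjugating by the coordinate swap $\sigma$ turns the lower triangular $B$-system into an upper triangular system on $X_2\times X_1$ that satisfies all hypotheses of Theorem~\ref{515thm}, so that theorem can be applied as a black box; undoing the adjoint reversal (Lemma~\ref{lemadj} applied to each single-block system $x_i(n+1)=B_{ii}(n)x_i(n)$) then yields the dichotomies of~\eqref{iid} on $\Z^-$. Your reduction buys economy --- no repetition of the reflexivity/weak-compactness argument --- while the paper's direct proof has the advantage of making explicit which subspaces of $X_1,X_2$ play the roles of $\mathcal S_2'$ and $\mathcal S_1$ in the $\Z^-$ setting.
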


\begin{proof}
The proof is similar to the proof of Theorem~\ref{515thm}, and therefore we only provide a sketch of it. 
Firstly, from Theorem~\ref{517thm} it follows that~\eqref{iid} admits an exponential dichotomy for $i=1$.

Next, we consider the system~\eqref{eq851} with $B(n)$ as in~\eqref{555eq} for $n\in \Z^+$. Since~\eqref{LDE} admits an exponential dichotomy, we find that~\eqref{eq851} admits an exponential dichotomy with respect to projections $\tilde P(n)=P(-n)^*$, $n\in \Z^+$. Moreover, $\dim \mathcal N\tilde P(0)=\dim \mathcal NP(0)=d$. Using the same notation as in the proof of Theorem~\ref{517thm},  we see that
\[
\mathcal R\tilde P(0)=\{(\xi, L\xi+\eta): \ \xi \in (\mathcal S_1^B)', \ \eta \in \mathcal S_2^B\},
\]
where
\[
\mathcal S_2^B=\left \{v_2\in X_2: \ \sup_{n\in \Z^+}\|\Phi_{22}^B(n, 0)v_2\|<\infty\right \},
\]
with $\Phi_{22}^B(m, n)$ denoting cocycle associated with the discrete system
\begin{equation}\label{654s}
x_{22}(n+1)=B_{22}(n)x_{22}(n), \quad n\in \Z^+.
\end{equation}
Moreover, $(\mathcal S_1^B)'$ consists of all $\xi\in \mathcal S_1^B$ for which there exists a bounded solution $(x_2(n))_{n\in \Z^+}$ of the equation
\[
x_2(n+1)=B_{21}(n)\Phi_{11}^B(n, 0)\xi+B_{22}(n)x_2(n), \quad n\in \Z^+.
\] Observe that $\mathcal S_2^B$ is closed, since $v_2\in \mathcal S_2^B$ implies $(0, v_2)\in \mathcal R\tilde P(0)$.
Finally, $L\xi=x_2(0)$ where $(x_2(n))_{n\in \Z^+}$ is the unique solution of the above equation with $x_2(0)\in (\mathcal S_2^B)^\perp$. Since $d<+\infty$, we have $\dim (\mathcal S_2^B)^\perp<+\infty$.

Hence, arguing as in the proof of Lemma~\ref{bounded} one can show that $L\colon (\mathcal S_1^B)'\to (\mathcal S_2^B)^\perp$ is a bounded linear operator. Note that $\mathcal S_1^B$ is closed since~\eqref{iid}  for $i=1$ (and thus also~\eqref{B11}) admits an exponential dichotomy.
Hence, proceeding as in the proof of Lemma~\ref{closed}, we find that $(\mathcal S_1^B)'$ is closed.

Let $P_{11}\colon X_1\to (\mathcal S_1^B)'$ be the projection onto $(\mathcal S_1^B)'$ along $((\mathcal S_1^B)')^\perp$, and $P_{22}\colon X_2\to \mathcal S_2^B$ the projection onto $\mathcal S_2^B$ along $(\mathcal S_2^B)^\perp$. We define a projection $\bar P$ on $X_1\times X_2$ by 
\[
\bar P=\begin{pmatrix}
P_{11} & 0\\
LP_{11} &P_{22}
\end{pmatrix}.
\]
Then, $\bar P$ is bounded and $\mathcal R\bar P=\mathcal R\tilde P(0)$. Consequently, \eqref{eq851} admits an exponential dichotomy with respect to the sequence of projections
\[
\bar P(n)=\Phi^B(n, 0)\bar P\Phi^B(0, n) \quad n\in \Z^+,
\]
where $\Phi^B(m, n)$ is the cocycle associated with~\eqref{eq851}. This easily implies that~\eqref{654s} admits an exponential dichotomy with respect to the sequence of projections
\[
P_{22}(n)=\Phi_{22}^B(n, 0)P_{22}\Phi_{22}^B(0, n), \quad n\in \Z^+.
\]
We conclude that~\eqref{iid} admits an exponential dichotomy for $i=2$, which completes the proof of the theorem.

\end{proof}

\section*{Acknowledgements}  
The authors thank N. Chalmoukis and the MathOverflow community for providing Example~\ref{EX2}.
D. Dragi\v cevi\' c was supported in part by the University of Rijeka under the project uniri-iz-25-108.

\end{document}